\theoremstyle{plain}
\newtheorem{theorem}{Theorem}[section]
\newtheorem{lemma}[theorem]{Lemma}
\theoremstyle{definition}
\newtheorem{remark}[theorem]{Remark}
\newtheorem{example}[theorem]{Example}
\numberwithin{equation}{section}
\DeclareMathOperator{\st}{St}
\DeclareMathOperator{\Aut}{Aut}
\DeclareMathOperator{\St}{St}
\DeclareMathOperator{\Z}{\mathbb{Z}}
\title[Beauville structures for quotients of generalised GGS-groups]{Beauville structures for quotients\\ of generalised GGS-groups}
 \author[E. Di Domenico]{Elena Di Domenico}
 \address{Elena Di Domenico: Department of Mathematics, University of Trento, 38123 Trento, Italy - University of the Basque Country UPV/EHU, 48080 Bilbao, Spain}
 \email{elena.didomenico@yahoo.it}
 \author[\c{S}. G\"{u}l]{\c{S}\"{u}kran G\"{u}l}
\address{\c{S}\"{u}kran G\"{u}l: Department of Mathematics\\ TED University\\
06420 Ankara, Turkey} 
\email{sukran.gul@tedu.edu.tr}
 \author[A. Thillaisundaram]{Anitha Thillaisundaram}
 \address{Anitha Thillaisundaram: Centre for Mathematical Sciences, Lund University, 223 62 Lund, Sweden}
 \email{anitha.thillaisundaram@math.lu.se}
 \keywords{Groups acting on rooted trees, finite $p$-groups, Beauville structures}
 \subjclass[2010]{Primary  20E08;  Secondary 20D15, 14J29}
 \date{\today}
\begin{document}

\begin{abstract} A finite group with a Beauville structure gives rise to a certain compact complex surface called a Beauville surface.
  G\"{u}l and Uria-Albizuri showed that quotients of the periodic Grigorchuk-Gupta-Sidki (GGS-)groups that act on the $p$-adic tree, for $p$ an odd prime, admit Beauville structures. We extend their result by showing that quotients of infinite periodic GGS-groups acting on $p^n$-adic trees, for $p$ any prime and $n\ge 2$, also admit Beauville structures.
\end{abstract}

\maketitle

\section{Introduction}

A \emph{Beauville surface} is a compact complex surface isomorphic to $(C_1\times C_2)/G$, where 
\begin{enumerate}
\item[$\bullet$] $C_1$ and $C_2$ are algebraic curves of genus at least 2, and $G$ is a finite group acting freely on $C_1\times C_2$ by holomorphic transformations,
\item[$\bullet$] the group $G$ acts faithfully on the curves $C_i$ such that $C_i/G\cong \mathbb{P}_1(\mathbb{C})$ and the covering map $C_i\rightarrow C_i/G$ is ramified over three points, for $i\in\{1,2\}$.
\end{enumerate}
 The group $G$ is then said to be a \emph{Beauville group}. 
 
 One can reformulate the condition for a finite group~$G$ to be a Beauville group purely in group-theoretical terms: for $x,y\in G$, let
 \[
 \Sigma(x,y)=\bigcup_{g\in G} \big(\langle x\rangle^g \cup \langle y\rangle^g \cup \langle xy\rangle^g\big),
 \]
 that is, the union of all conjugates of the cyclic subgroups generated by $x$, $y$ and $xy$. Then $G$ is a Beauville group if and only if $G$ is 2-generated and there exist generating sets $\{x_1,y_1\}$ and $\{x_2,y_2\}$ of~$G$ such that $\Sigma(x_1,y_1)\cap \Sigma(x_2,y_2)=\{1\}$.
The sets $\{x_1,y_1\}$ and $\{x_2,y_2\}$ are then called a \emph{Beauville structure} for~$G$.

Beauville groups have been intensely studied in recent times; see surveys \cite{ BBF, fai, fai2, jon}. 
For example, the abelian Beauville groups were classified by Catanese \cite{cat}: a finite abelian group $G$ is a Beauville group if and only if $G\cong C_n \times C_n$ for $n> 1$ with $\gcd(n, 6) = 1$. After abelian groups, the most natural class of finite groups to consider are nilpotent groups. The determination of nilpotent Beauville groups is easily reduced to the case of $p$-groups.

In~\cite{BBF}, it was shown that there are non-abelian Beauville $p$-groups of order $p^n$ for every $p\geq5$ and every $n\geq3$.
The first explicit infinite family of Beauville $2$-groups was constructed in \cite{BBPV}. 
In \cite{SV}, Stix and Vdovina constructed an infinite family of Beauville $p$-groups, for every prime $p$, by considering quotients of ordinary triangle groups.
In~\cite{FAG}, Fern{\'a}ndez-Alcober and G{\"u}l extended Catanese's  criterion for abelian Beauville groups to finite $p$-groups satisfying certain conditions which are much weaker than commutativity. They also give the first explicit infinite family of Beauville $3$-groups, and they show that there are Beauville $3$-groups of order $3^n$ for every $n \geq 5$.

In~\cite{GUA},  G\"{u}l and Uria-Albizuri showed that the quotients of periodic Grigorchuk-Gupta-Sidki (GGS-)groups admit Beauville  structures, giving another infinite family of Beauville $p$-groups, for~$p$ an odd prime. 
The GGS-groups were some of the early examples of groups acting on rooted trees, which first arose as easily describable examples of infinite finitely generated periodic groups.  Since then, groups acting on rooted trees have provided many other interesting and exotic examples, such as infinite finitely generated groups of intermediate word growth, infinite finitely generated amenable but not elementary amenable groups, etc; see for instance~\cite{growth, Handbook}.

The GGS-groups are infinite groups acting faithfully on the $p$-adic tree. The natural  quotients of a GGS-group~$G$ are $G/\st_G(n)$, for $n\in\mathbb{N}$, where $\st_G(n)$ is the normal subgroup of the elements of~$G$ that pointwise fix the vertices at the $n$th level of the tree. The quotient $G/\st_G(n)$ acts on the finite tree consisting of the first $n$ layers of the full $p$-adic tree.

We extend the result of~\cite{GUA} to a generalisation of the GGS-groups to the $p^n$-adic tree, for any prime~$p$ and $n\ge 2$, by showing that infinitely many quotients of infinite periodic GGS-groups acting on the $p^n$-adic  tree do admit Beauville structures.

 Briefly, a GGS-group acting on the $p^n$-adic tree is a group $G=\langle a,b\rangle$, where $a=(1\,2\, \cdots \,p^n)$ permutes the first-level vertices, whereas $b$ fixes the first-level vertices and is defined recursively by $b=(a^{e_1},\ldots,a^{e_{p^n-1}},b)$, for $e_1,\ldots,e_{p^n-1}\in \mathbb{Z}/p^n\mathbb{Z}$ with not all $e_i$ being zero. Here $(a^{e_1},\ldots,a^{e_{p^n-1}},b)$ refers to the respective independent actions at the $p^n$ maximal subtrees. We write $\mathbf{e}=(e_1,\ldots,e_{p^n-1})$ and call it the \emph{defining vector} of~$G$. We refer the reader to Section~\ref{sec:preliminaries} for  background material and precise definitions.
These groups were constructed by Vovkivsky~\cite{vov}. A prominent group in this family, that was defined earlier in~\cite{GR}, is the second Grigorchuk group, which acts on the $4$-adic tree with defining vector $\mathbf{e}=(1,0,1)$.

For an infinite periodic GGS-group~$G$ acting on the $p^n$-adic tree, we associate to~$G$ a number $m_G\in\mathbb{N}$ that is related to the order of certain group elements; we refer the reader to  Section~\ref{sec:Beauville} for precise details.
The main result of this paper is as follows.

\begin{theorem}\label{thm:main}
Let $G$ be an infinite periodic GGS-group acting on the $p^n$-adic tree for $p$ a prime and $n\ge 2$.  Then $G/\st_G(k)$ admits a Beauville structure for $k\ge m_G$.
\end{theorem}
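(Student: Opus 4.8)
The plan is to verify, for $k\ge m_G$, the group-theoretic characterisation of Beauville groups recalled above for the finite $p$-group $Q:=G/\st_G(k)$, which is $2$-generated and, being non-cyclic once $k\ge m_G$, satisfies $Q/\Phi(Q)\cong(\mathbb{Z}/p\mathbb{Z})^{2}$. The first step is the standard reduction to socles: in a finite $p$-group every nontrivial cyclic subgroup $\langle w\rangle$ has a unique subgroup of order $p$, its \emph{socle} $\Omega_1\langle w\rangle$, and for $u,v,g$ one has $\langle u\rangle^{g}\cap\langle v\rangle\neq 1$ if and only if $(\Omega_1\langle u\rangle)^{g}=\Omega_1\langle v\rangle$. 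Hence, given generating pairs $\{x_1,y_1\}$ and $\{x_2,y_2\}$ of $Q$, the condition $\Sigma(x_1,y_1)\cap\Sigma(x_2,y_2)=\{1\}$ holds precisely when, for all $w_1\in\{x_1,y_1,x_1y_1\}$ and $w_2\in\{x_2,y_2,x_2y_2\}$, the socles $\Omega_1\langle w_1\rangle$ and $\Omega_1\langle w_2\rangle$ are not conjugate in $Q$. So it suffices to produce one choice of generating pairs realising these nine non-conjugacies.

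I would next fix the two generating pairs as explicit short words in $a$ and $b$ — essentially of the shapes $a$, $a^{s}b$, $a^{t}b$ — the exponents being dictated by the information about the orders of the elements $a^{i}b^{j}$ in $Q$, about the abelianisation $G/G'$, and about the exact $p$-power order of $a$ in $Q$; this is exactly where $k\ge m_G$ is needed, as it guarantees that each element appearing in the construction already attains in $Q$ its eventual order, these orders being finite because $G$ is periodic. To separate socles I would use conjugacy invariants of two kinds. The cheap one is the image of an element in $G/G'$, which is constant on conjugacy classes. The main one is membership in the normal subgroups $\st_G(j)/\st_G(k)$ of $Q$, and especially the homomorphism $\lambda\colon Q\to\mathbb{Z}/p^n\mathbb{Z}$ recording the action on the first level of the tree (so $\lambda(a)=1$, $\lambda(b)=0$); since $\lambda$ factors through $G/\st_G(1)\cong\mathbb{Z}/p^n\mathbb{Z}$, which is abelian, it is constant on conjugacy classes. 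With the exponents chosen appropriately, these invariants rule out conjugacy between any two of the nine socles that are not both contained in the first-level stabiliser $\st_G(1)/\st_G(k)$ of $Q$.

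What then remains is to treat the socles that do lie in $\st_G(1)/\st_G(k)$. Here I would descend through the embedding $\psi\colon\st_G(1)/\st_G(k)\hookrightarrow\big(G/\st_G(k-1)\big)^{p^n}$, whose $p^n$ coordinates are permuted cyclically under conjugation by $a$ and acted on coordinatewise under conjugation by elements of $\st_G(1)$: a conjugacy between two elements of $\st_G(1)/\st_G(k)$ forces their tuples of first-level sections to agree up to a cyclic rotation together with a simultaneous conjugation. Since each socle in question is generated by the appropriate $p$-power of one of the chosen generators $w$, its sections can be computed explicitly from those of $w$ — and, when $w$ acts on the first level as a $p^n$-cycle, they are all conjugate to a single such word — so the required non-conjugacy in $Q$ reduces to a non-conjugacy of the same type, one level down, in $G/\st_G(k-1)$. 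Because $G$ is periodic, all the orders involved are finite and the descent has depth at most $k$, so it terminates at the leaves of the tree; arranging that the section data produced by the first triple and by the second triple already differ at the first step of this descent closes the argument. The step I expect to be the main obstacle is precisely this last one: conjugacy in these self-similar groups is delicate, and the separation has to be uniform in $k$ and, more seriously, uniform over all primes $p$ — including $p=2$ and $p=3$, where $Q/\Phi(Q)\cong(\mathbb{Z}/p\mathbb{Z})^{2}$ is too small for the six ``projective directions'' carried by the two triples to be made disjoint, so that almost all of the separation must come from $\lambda$ and from the section analysis — and uniform over all admissible defining vectors $\mathbf{e}$, for which both the structure of $G/G'$ and the torsion behaviour of $G$ genuinely vary.
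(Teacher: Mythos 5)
Your overall framework --- reduction to socles of cyclic subgroups, explicit generating triples, order computations resting on the periodicity of $G$, separation by abelianisation and first-level-action invariants, descent through $\psi$ into the sections, and stabilisation of orders to pass from one quotient to all $k\ge m_G$ --- matches the skeleton of the paper's argument (which proves the statement for $G/\st_G(m+3)$ and then invokes a lemma of Fuertes--Jones on order preservation). However, there is a genuine gap at exactly the point you flag as ``the main obstacle'': you propose taking both triples to consist of short words of the shape $a$, $a^{s}b$, $a^{t}b$, and you supply no mechanism forcing the section data of the two triples to ``already differ at the first step of this descent''. For $p=2$ and $p=3$ the quotient $Q/\Phi(Q)\cong(\mathbb{Z}/p\mathbb{Z})^{2}$ forces the two triples to share Frattini directions, and two elements $a^{i}b^{j}w$ and $a^{i'}b^{j'}w'$ in the same coset of $G'$ (with $i,i',j,j'$ coprime to $p$) typically have \emph{conjugate} socles: this is precisely the phenomenon exploited in Theorem~\ref{thm:easy-exceptional} to show that the groups in the class $\mathcal{E}$ are \emph{not} Beauville, where the computation shows that the socle of $(a^{i}b^{j}w)^{p^{t-1}}$ is, level by level, independent of $i$, $j$ and $w$. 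A descent starting from two triples of $a^{i}b^{j}$-type words therefore has no reason to terminate in a separation.

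The missing ingredient is the construction of an auxiliary element $c\in\gamma_3(G)$ with $\psi_{\mu+1}(c)=(1,\ldots,1,[a^{p^{R}},b,a^{p^{R}}])$, obtained from the branching-type Lemmata~\ref{lem:fractal}, \ref{lem:branching-new} and~\ref{lem:branching}; the second triple is then taken to be $Y=\{ac,b,acb\}$ against $X=\{a^{p-1},ab,a^{p}b\}$. The point is that $ac\equiv a$ and $acb\equiv ab$ modulo $\gamma_3(G)$, so the two triples deliberately share directions, and the separation is achieved instead by concrete invariants of the socle generators read off from their portraits: the number of sections at a suitable level that are non-rooted automorphisms, and whether those sections lie in $\gamma_3(G)$ or in $\st_G(1)\setminus\gamma_3(G)$. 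Producing such a $c$ is the technical heart of the paper and is where the hypothesis that $G$ is infinite enters (via the existence of $q$ with $R_q=R_{q+1}<n$); without this element, or an equivalent device, your plan cannot be completed.
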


 In particular, these quotients of  infinite periodic GGS-groups acting on the $2^n$-adic tree yield infinite families of
 Beauville $2$-groups,  and infinite periodic GGS-groups acting on the $3^n$-adic tree give many more infinite families of Beauville $3$-groups. 
 Note that Beauville $2$-groups and $3$-groups are somewhat rare, precisely because of the small number of maximal subgroups. 
 Indeed, for a time it was unclear whether such groups even existed, until the first examples were given in~\cite{FGJ}.
 We also want to emphasise that for the GGS-groups acting on the $p$-adic tree, if one were to consider the case $p=2$ there is only one group, which is the infinite dihedral group, hence its quotients do not admit Beauville structures, and for $p=3$ only one out of the three isomorphism classes of such groups has quotients admitting Beauville structures; see~\cite{GUA} and~\cite{Petschick}.

For finite 2-groups, one can further ask the question of whether the group gives rise to  a mixed Beauville surface, where a mixed Beauville surface is a Beauville surface where the action
of the group interchanges the two curves $C_1$ and $C_2$ defining the surface. It is quite difficult to find examples of finite 2-groups with mixed Beauville structures; see~\cite{FP} and references therein.
As clarified in Remark~\ref{rmk:mixed}, for $G$ an infinite periodic GGS-group acting on the $2^n$-adic tree for $n\ge 2$, the quotient $G/\st_G(k)$ does not admit a mixed Beauville structure for $k\ge m_G$.

G\"{u}l and Uria-Albizuri also showed in~\cite{GUA} that for $G$ a GGS-group acting on the $p$-adic tree, there are quotients of~$G$ admitting Beauville structures if and only if $G$ is periodic. The situation is not so clear cut for GGS-groups acting on the $p^n$-adic tree, for $n\ge 2$. In particular, there are non-periodic GGS-groups acting on the $p^n$-adic tree which have quotients admitting Beauville structures; see Remark~\ref{rem:non-periodic}. Furthermore, our main result above deals only with infinite periodic GGS-groups. For finite GGS-groups, some quotients admit Beauville structures; see Remark~\ref{rmk:finite}. However a large class~$\mathcal{E}$ of finite GGS-groups have no level-stabiliser quotients admitting Beauville structures. 
We refer the reader to  Section~\ref{sec:exceptional} for the definition of~$\mathcal{E}$.
We have the following result.

\begin{theorem}\label{thm:exceptional}
Let $G\in\mathcal{E}$ be a finite GGS-group acting on the $p^n$-adic tree for $p$ a prime and $n\ge 2$.  Then $G/\st_G(k)$ is not a Beauville group for all $k\in \mathbb{N}$.
\end{theorem}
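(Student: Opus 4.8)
The plan is to combine a reduction to finitely many quotients with a structural obstruction to being a Beauville group. Since $G$ is finite, its faithful action on the $p^n$-adic tree factors through a finite truncation: the chain $\st_G(1)\ge\st_G(2)\ge\cdots$ is descending with trivial intersection, so $\st_G(k_0)=1$ for some $k_0\in\mathbb{N}$, and hence $Q_k:=G/\st_G(k)$ takes only finitely many values, namely $Q_1,Q_2,\dots$ where $Q_k=G$ for $k\ge k_0$. It therefore suffices to prove that $Q_k$ is not a Beauville group for $1\le k\le k_0$. The case $k=1$ is immediate, since $Q_1\cong C_{p^n}$ is cyclic; and for $2\le k\le k_0$ one checks that $Q_k$ is a non-cyclic $2$-generated $p$-group (already $Q_2$ is non-abelian, as $[a,b]\notin\st_G(2)$), so $\Phi(Q_k)$ has index $p^2$, $Q_k/\Phi(Q_k)\cong C_p\times C_p$, and for every generating pair $\{x,y\}$ the images of $x$, $y$ and $xy$ generate three distinct one-dimensional subspaces of $Q_k/\Phi(Q_k)$; in particular $x,y,xy\notin\Phi(Q_k)$.

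The core of the argument is to exhibit, for each such $k$, a single non-trivial element $z\in Q_k$ lying in some conjugate of $\langle g\rangle$ for \emph{every} $g\in Q_k\setminus\Phi(Q_k)$. Granting this, for any two generating pairs $\{x_1,y_1\}$ and $\{x_2,y_2\}$ of $Q_k$ we obtain $z\in\Sigma(x_1,y_1)$ (taking $g=x_1$) and $z\in\Sigma(x_2,y_2)$ (taking $g=x_2$), so $\Sigma(x_1,y_1)\cap\Sigma(x_2,y_2)\ne\{1\}$ and $Q_k$ is not a Beauville group, as required.

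To produce $z$, I would take it to be the generator of the socle of a cyclic maximal subgroup of $Q_k$, using the elementary fact that if a $p$-group $Q$ with $Q/\Phi(Q)\cong C_p\times C_p$ has at least $p-1$ of its $p+1$ maximal subgroups cyclic, then any two of those cyclic maximal subgroups intersect in a subgroup of order at least $p$, hence share their unique subgroup $\langle z\rangle$ of order $p$, which is then normal --- so central --- and contained in each of them. Since a generating pair $\{x,y\}$ of $Q$ singles out exactly three of the $p+1$ maximal subgroups (those of the form $\langle w\rangle\Phi(Q)$ with $w\in\{x,y,xy\}$) and at most two maximal subgroups are non-cyclic, at least one of $x$, $y$, $xy$ generates a cyclic maximal subgroup $M$ of $Q$ and is therefore a power of the socle generator $z$ of $M$. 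For $p=2$ this is just the statement that $Q_k$ has a cyclic maximal subgroup. Thus the theorem reduces to showing that, for $G\in\mathcal{E}$ and all $k\ge2$, the quotient $Q_k$ has at least $p-1$ cyclic maximal subgroups.

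This last claim is the real content, and it is where the description of $\mathcal{E}$ in Section~\ref{sec:exceptional} is used. I would prove it by induction on $k$, decomposing an element of $\st_{Q_k}(1)$ through its first-level sections (which lie in $Q_{k-1}$) and exploiting the self-similar recursion $b=(a^{e_1},\dots,a^{e_{p^n-1}},b)$. The defining hypothesis of $\mathcal{E}$ is precisely what keeps these sections small, forcing the subgroup generated by $a$ --- or by a suitable short word in $a$ and $b$, together with its conjugates --- to remain, modulo $\st_G(k)$, a cyclic subgroup of index $p$ at every level, and (for $p$ odd) producing $p-1$ such subgroups in distinct maximal classes. The main obstacle is exactly this uniform control over all $k$: it amounts to a computation in the congruence quotients of the $\mathbb{Z}/p^n\mathbb{Z}$-module spanned by the cyclic shifts of the defining vector, and the numerical conditions that cut out $\mathcal{E}$ are designed to make it go through; any small values of $k$ not covered by the induction can be handled directly from the explicit form of $Q_k$. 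Everything else --- the reduction to finitely many quotients, the passage to $\Sigma$-sets, and the cyclic-maximal-subgroup criterion --- is formal.
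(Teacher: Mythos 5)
Your reduction to finitely many quotients and your pigeonhole observation that a generating pair $\{x,y\}$ singles out three distinct maximal subgroups are both fine, but the core mechanism you propose does not exist in these groups, in two respects. First, there is no single non-trivial $z$ lying in a conjugate of $\langle g\rangle$ for \emph{every} $g\in Q_k\setminus\Phi(Q_k)$ once $k\ge 2$: if $z\ne 1$ lies in a conjugate of a cyclic $p$-group $C$, then $\langle z\rangle$ contains the corresponding conjugate of the unique order-$p$ subgroup of $C$; for $C=\langle a\rangle$ that subgroup is $\langle a^{p^{n-1}}\rangle$, all of whose non-trivial elements lie outside the normal subgroup $\st_{Q_k}(1)$, whereas for $C=\langle b\rangle$ it lies inside $\st_{Q_k}(1)$, so no $z$ can serve both $g=a$ and $g=b$. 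Second, and fatally for your refined version, these quotients have no cyclic maximal subgroups for $k\ge 2$: $|Q_k|$ grows far faster in $k$ than the exponent does. Concretely, take $p=2$, $n=2$, $\mathbf{e}=(2,0,2)$, which lies in family (ii) of $\mathcal{E}$. Then $\st_{Q_2}(1)\cong C_2\times C_2$, so $|Q_2|=16$, while every element of $Q_2$ has order at most $4$; hence none of the three maximal subgroups (of order $8$) is cyclic, and your claim fails already in its weakest instance $p-1=1$. The same order-versus-exponent obstruction rules out the strategy for all $G\in\mathcal{E}$ and all $k\ge 2$.

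What the paper actually does is weaken the requirement drastically: one does not need some element of $\{x,y,xy\}$ to \emph{generate} a cyclic maximal subgroup, only to lie in a prescribed union of cosets of $G'$ --- for family (i), each triple contains an element of $a^ib^jG'$ with $i,j\not\equiv0\pmod p$; for family (ii) one uses $a^iG'$ or $a^ibG'$ with $i$ odd, depending on $k$. One then shows by an explicit section-by-section computation that all elements of these cosets have conjugate (indeed often equal) subgroups of order $p$, e.g.\ $\langle(a^ib^jw)^{p^{t-1}}\rangle=\langle(ab)^{p^{t-1}}\rangle$ for all $w\in G'$ in case (i), and a conjugacy statement via Lemma~\ref{lem:all-conjugate} in case (ii). This is precisely where the numerical conditions defining $\mathcal{E}$ enter: they force the $G'$-perturbation to vanish upon taking the relevant power. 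Your proposal defers exactly this computation (``the real content'') to an induction resting on the false cyclic-maximal-subgroup premise, so the argument as written cannot be completed.
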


 To prove our theorems, we largely make use of the subgroup structure of the respective groups.

\smallskip

We note that a generalisation of a Beauville group is a group admitting a ramification structure. These are groups  associated to surfaces isogenous to a higher product of curves. We refer the reader to~\cite{NT} for an overview of groups with ramification structures. Only recently were some groups acting on rooted trees shown to admit (non-Beauville) ramification structures. These include 
quotients of the Grigorchuk groups and quotients of periodic multi-EGS groups acting on the $p$-adic tree, for $p$ an odd prime;  see~\cite{NT} and~\cite{DGT2} respectively.

\smallskip
\noindent\emph{Organisation.} Section~\ref{sec:preliminaries} of this paper consists of background material for groups acting on rooted trees and 
here we also recall the generalisation of the GGS-groups. 
In Section~\ref{sec:properties} we establish key properties of these groups. In Section~\ref{sec:Beauville} we prove Theorem~\ref{thm:main} and lastly in Section~\ref{sec:exceptional} we prove Theorem~\ref{thm:exceptional}.


\section{Preliminaries}\label{sec:preliminaries}

All trees considered here will be rooted, meaning that there is a distinguished vertex called the root, with one degree less than all other vertices. For $d\in\mathbb{N}_{\ge 2}$, let $T$ be the $d$-adic  tree,
  meaning all vertices have $d$ children.  Using the
  alphabet $A = \{1,\ldots,d\}$, the vertices~$u_\omega$ of~$T$ are
  labelled bijectively by elements~$\omega$ of the free
  monoid~$A^*$ as follows: the root of~$T$
  is labelled by the empty word~$\varnothing$, and for each word
  $\omega \in A^*$ and letter $a \in A$ there is an edge
  connecting $u_\omega$ to~$u_{\omega a}$.  We say
  that $u_\omega$ precedes $u_\lambda$ whenever $\omega$ is a prefix of $\lambda$.  When convenient, we do not differentiate between $A^*$ and vertices of~$T$.

  There is a natural length function on~$A^*$. The words
  $\omega$ of length $\lvert \omega \rvert = n$, representing vertices
  $u_\omega$ that are at distance $n$ from the root, are the $n$th-level vertices and form the \textit{$n$th layer} of the tree. The elements of the boundary $\partial T$ correspond
  naturally to infinite simple rooted paths, and they are in one-to-one correspondence with the $d$-adic integers.

  Denote by $T_u$ the full  subtree of $T$ that has its root at
  a vertex~$u$ and includes all vertices succeeding~$u$.  For any
  two vertices $u = u_\omega$ and $v = u_\lambda$, the map
  $u_{\omega \tau} \mapsto u_{\lambda \tau}$, induced by replacing the
  prefix $\omega$ by $\lambda$, yields an isomorphism between the
  subtrees $T_u$ and~$T_v$. 

  Every automorphism of $T$ fixes the root, and the orbits of
  $\mathrm{Aut}(T)$ on the vertices of the tree $T$ are precisely its
  layers. For $f \in \mathrm{Aut}(T)$, the image of a vertex $u$ under
  $f$ is denoted by~$u^f$.  The automorphism $f$ induces a faithful action
  on the monoid~$A^*$ and for
  $a \in A$ we have $(\omega a)^f = \omega^f a'$ where $a'\in A$ is
  uniquely determined by $\omega$ and~$f$.  From this we have a permutation
  $f(\omega)$ of~$A$ where
  \[
  (\omega a)^f = \omega^f a^{f(\omega)}, \qquad \text{and hence}
  \quad   (u_{\omega a })^f = u_{\omega^f a^{f(\omega)}},
  \]
 and $f(\omega)$ is called the \textit{label} of~$f$ at~$\omega$.  The  collection  of  all  labels  of $f$ constitutes the \emph{portrait} of~$f$, and there is a one-to-one correspondence between automorphisms of~$T$ and portraits. 
  The automorphism~$f$ is \textit{rooted} if $f(\omega) = 1$ for
  $\omega \ne \varnothing$.  It is \textit{directed}, with directed
  path~$\ell$ for some $\ell \in \partial T$, if the support
  $\{ \omega \mid f(\omega) \ne 1 \}$ of its labelling is infinite and
  marks only vertices at distance $1$ from the set of
    vertices corresponding  to the path~$\ell$. Additionally, the \textit{section} of~$f$ at a vertex~$u$ is defined to be the unique automorphism $f_u$ of $T \cong T_{u}$ given by the condition $(uv)^f = u^f v^{f_u}$ for $v \in A^*$.


\subsection{Subgroups of $\Aut(T)$}
 For $G\le \Aut(T)$, the
\textit{vertex stabiliser} $\text{st}_G(u)$ is the subgroup
consisting of elements in $G$ that fix the vertex~$u$. For
$n \in \mathbb{N}$, the \textit{$n$th-level stabiliser}
  $\St_G(n)= \bigcap_{\lvert \omega \rvert =n}
  \text{st}_G(u_\omega)$
is the subgroup consisting of automorphisms that fix all vertices at
level~$n$.  Let $T_{[n]}$ be the finite subtree of $T$ of
vertices up to level~$n$. Then  $\St_G(n)$ is the kernel of the induced action of $G$ on $T_{[n]}$, and we will denote by $G_n$ the quotient $G/\St_G(n)$.

Each $g\in \St_{\mathrm{Aut}(T)} (n)$ can be 
  described completely in terms of its restrictions to the subtrees
  rooted at vertices at level~$n$.  Indeed, the map 
\[
\psi_n \colon \St_{\mathrm{Aut}(T)}(n) \longrightarrow
\prod_{\lvert \omega \rvert = n} \mathrm{Aut}(T_{u_\omega})
\cong \mathrm{Aut}(T) \times \overset{d^n}{\cdots} \times
\mathrm{Aut}(T)
\]
is a natural
  isomorphism mapping $g\in \St_{\mathrm{Aut}(T)} (n)$ to its $n$th-level sections. For ease of notation, we write $\psi=\psi_1$.

For  $\omega\in A^*$, we further define: 
\begin{align*}
\varphi_\omega :\text{st}_{\mathrm{Aut}(T)}(u_{\omega}) &\longrightarrow \mathrm{Aut}(T_{u_\omega}) \cong \mathrm{Aut}(T)\\
 g\quad&\longmapsto \quad g_\omega
\end{align*}

A group $G \leq \Aut(T)$ is said to be \emph{self-similar} if for every $g\in G$ and every vertex $u$, the section~$g_u$, of $g$ at $u$, is an element of $G$.
For a self-similar group $G\le \text{Aut}(T)$, for $m\ge 2$ and $i\in A^{m-1}$, we will write
\[
\psi_m^i=\psi\circ\varphi_i.
\]
That is, for $g\in \text{st}_G(u_i)$ with $\varphi_i(g)\in\st_G(1)$, we have $\psi_m^i(g)$ gives the components of $\varphi_i(g)$ corresponding to the children of the $(m-1)$st-level vertex $i$. 
In addition,  we write 
\[
{\phi}_{n,m}:\text{St}_{G_n}(m)\longrightarrow G_{n-1}\times \overset{d^{m}}\cdots\times G_{n-1}
\]
for the corresponding $\psi_m$ when working in the quotient. Likewise, we write $\phi_n=\phi_{n,1}$ for simplicity.


\subsection{GGS-groups acting on the $p^n$-adic tree}\label{sec: GGS-p^n}

Let $T$ be the $p^n$-adic tree for a prime~$p$ and $n\in\mathbb{N}$. Given a non-zero vector $\mathbf{e}=(e_1,\ldots,e_{p^n-1})\in (\Z/p^n\Z)^{p^n-1}$, the GGS-group~$G=G_{\mathbf{e}}$ associated to the defining vector~$\mathbf{e}$  is the group generated by the rooted automorphism~$a$ corresponding to the cycle $(1\, 2\,\cdots\, p^n)$ and by a directed automorphism $b\in \St_{\Aut(T)}(1)$ defined recursively via
\[
\psi(b)=(a^{e_1}, a^{e_2},\ldots, a^{e_{p^n-1}},b).
\]

Unlike the GGS-groups acting on the $p$-adic tree, the GGS-groups acting on the $p^n$-adic tree for $n\ge 2$ are not all infinite. A necessary and sufficient condition for these groups to be infinite was given by Vovkivsky~\cite{vov}. He proved that such a group is  infinite if and only if there exists an $i\geq 0$ such that
\[
R_0\le R_1\le \cdots \le R_i=R_{i+1}=\cdots < n
\]
where the sequence $R_j$ is defined recursively as follows: $R_0$ is the largest integer such that $p^{R_0}\mid e_\ell$ for all $\ell\in\{1,\ldots,p^n-1\}$; and then for $j\geq0$ and while $R_j < n$, the number $R_{j+1}$ is defined as the largest
integer such that $p^{R_{j+1}}$ divides $e_\ell$ for all $\ell\in \{p^{R_j}, 2p^{R_j},\ldots,p^n-p^{R_j}\}$.

Note that the order of $a$ is $p^n$ and the order of $b$ is $p^{n-R_0}$.  Further, from \cite[Thm.~2.1]{Elena-paper}
we have $G/G'\cong C_{p^n} \times C_{p^{n-R_0}}$.

Vovkivsky further proved in~\cite{vov} that  a GGS-group acting on the $p^n$-adic tree is a periodic group if and only if for each $k\in \{0,\ldots,n-1\}$, 
\[
S[k]\equiv 0\pmod{p^{k+1}}
\]
where 
\begin{equation}\label{eq:S}
S[k]=e_{p^{k}}+e_{2p^{k}}+\cdots+e_{p^{n}-p^{k}}.
\end{equation}

We recall the prominent example of such a GGS-group, 
which is for the case $p=n=2$:

\begin{example} Let $T$ be the $4$-adic tree. The \emph{second Grigorchuk group} $\Gamma \leq \Aut(T)$  is generated by two automorphisms $a$ and $b$, where $a$ is the rooted automorphism corresponding to the cycle $(1 \, 2 \, 3 \, 4)$, and $b \in \St_{\Gamma}(1)$ is recursively defined by $\psi(b)=(a,1,a,b)$. The group~$\Gamma$,  which is periodic, was first defined in~\cite{GR}. 
\end{example}

For more information on GGS-groups acting on the $p^n$-adic tree, see~\cite{vov,Elena,Elena-paper}.


\section{Properties of GGS-groups acting on the $p^n$-adic tree}\label{sec:properties}

For~$G=G_{\mathbf{e}}$, a GGS-group acting on the $p^n$-adic tree,
recall that $R_0$ is the largest integer such that $p^{R_0}\mid e_\ell$ for all $\ell\in\{1,\ldots,p^n-1\}$. 

Suppose now that $G$ is periodic. To each element $a^{ip^r}b^{jp^s}$ in~$G$, where $0\le r< n$, $0\le s< n-R_0$ and $i,j\not\equiv 0 \pmod p$, we associate the following numbers:
\begin{enumerate}
\item[(i)] If $p^{n-s}\nmid S[r]$, let  $ t_0,t_1,\ldots, t_{m_{r,s}}\in\mathbb{N}$ for some $1\le m_{r,s}< n$ be  such that  
\begin{enumerate}
\item[$\bullet$] $t_{k}<n-s\le t_{m_{r,s}}$ for all $k<m_{r,s}$,
\item[$\bullet$] $p^{t_{0}}\mid S[r]$ and $p^{t_{0}+1}\nmid S[r]$, 
\item[$\bullet$] $p^{t_k} \mid S[t_{k-1}+s]$ and $p^{t_k+1} \nmid S[t_{k-1}+s]$ for $1\le k< m_{r,s}$,
\item[$\bullet$] $p^{t_{m_{r,s}}}\mid S[t_{m_{r,s}-1}+s]$,
\end{enumerate}
where the $S[\lambda]$ for $\lambda\in \{0,1,\ldots,n-1\}$ are as in~(\ref{eq:S}).
\item[(ii)] If $p^{n-s}\mid S[r]$, then we set $m_{r,s}=0$,  and for convenience define $t_0=n-s$.
\end{enumerate}

If we are in case (i), note that $t_0$ does not depend on $s$.

\begin{lemma}\label{orders}
Let $G=\langle a,b\rangle$ be a periodic GGS-group acting on the $p^n$-adic  tree, let $0\le r< n$, $0\le s< n-R_0$  and $i,j\not\equiv 0 \pmod p$. Let $m:=m_{r,s}$ and $ t_0, t_{1}, \ldots, t_m\in\mathbb{N}$  be as  above. Then the order of $a^{ip^r}b^{jp^s}$ in both $G$ and $G_{m+3}$ is $p^{t(r,s)}$ where
\[
t(r,s)=(m+2)n-(t_0+\cdots+t_{m-1}+s(m+1)+r+R_0),
\]
and if $p^{n-s}\mid S[r]$, that is, when $m=0$, we take $t_0+\cdots +t_{m-1}=0$.
\end{lemma}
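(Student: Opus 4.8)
The plan is to compute the order of $g:=a^{ip^r}b^{jp^s}$ by peeling off one level at a time, using the recursive definition of~$b$, and to induct on $m=m_{r,s}$. Since $i\not\equiv 0\pmod p$ and $r<n$, the element $a^{ip^r}$ has order $p^{n-r}$ as a permutation of the first layer, so the image of~$g$ in~$G_1$ has order $p^{n-r}$; hence $g^{p^{n-r}}\in\St_G(1)$ and $\mathrm{ord}(g)=p^{n-r}\cdot\mathrm{ord}\big(g^{p^{n-r}}\big)$, both in~$G$ and in every~$G_k$ with $k\ge 1$. Everything thus reduces to computing $\mathrm{ord}\big(g^{p^{n-r}}\big)$, while keeping track of the level at which this order is already attained.

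The key step is a section computation. Since $a^{ip^r}$ is rooted and $b^{jp^s}$ is directed with $\psi(b^{jp^s})=(a^{jp^se_1},\ldots,a^{jp^se_{p^n-1}},b^{jp^s})$, a routine collection shows that $g^{p^{n-r}}\in\St_G(1)$ is a product of $p^{n-r}$ conjugates of $b^{jp^s}$ by powers of $a^{ip^r}$, and one reads off $\psi\big(g^{p^{n-r}}\big)$ coordinate by coordinate. The cyclic group $\langle a^{ip^r}\rangle=\langle a^{p^r}\rangle$ acts on the $p^n$ first-layer vertices as $p^r$ orbits of length $p^{n-r}$, and the orbit containing the last vertex is exactly the set of multiples of~$p^r$. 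At each of the $p^{n-r}$ coordinates lying in that orbit, collecting leaves one copy of $b^{jp^s}$ together with the sections $a^{jp^se_\ell}$ for $\ell\in\{p^r,2p^r,\ldots,p^n-p^r\}$; since powers of~$a$ commute, such a section is conjugate in~$G$ to $a^{jp^sS[r]}b^{jp^s}$ by~(\ref{eq:S}). At each of the remaining coordinates the section is a power of~$a$ whose exponent is a $\Z$-linear combination of the~$e_\ell$, hence divisible by $p^{R_0}$, so such a section has order at most $p^{n-R_0-s}$.

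For the base case $m=0$ we have $p^{n-s}\mid S[r]$, so $a^{jp^sS[r]}=1$ and the distinguished sections equal $b^{jp^s}$, which has order $p^{n-R_0-s}$ since $b$ has order $p^{n-R_0}$ and $s<n-R_0$; moreover $b^{jp^s}\in\St_G(1)$ has image $(a^{jp^se_1},\ldots,a^{jp^se_{p^n-1}},1)$ under $\phi_{2,1}$, and since some~$e_\ell$ has $p$-adic valuation exactly~$R_0$, this order is already attained in~$G_2$. Hence $\mathrm{ord}\big(g^{p^{n-r}}\big)=p^{n-R_0-s}$, attained in~$G_3$, so $\mathrm{ord}(g)=p^{n-r+n-R_0-s}=p^{t(r,s)}$, attained in $G_3=G_{m+3}$. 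For the inductive step $m\ge 1$ we have $p^{n-s}\nmid S[r]$; writing $S[r]=p^{t_0}u$ with $p\nmid u$ gives $t_0<n-s$, and the distinguished section is conjugate in~$G$ to $h:=a^{i_1p^{r_1}}b^{j_1p^{s_1}}$ with $r_1=s+t_0<n$, $s_1=s<n-R_0$ and $i_1=ju$, $j_1=j$ prime to~$p$. Comparing the defining divisibility conditions shows $m_{r_1,s_1}=m-1$, with associated numbers $t_1,\ldots,t_m$; so by the induction hypothesis $\mathrm{ord}(h)=p^{t(r_1,s_1)}$, attained in~$G_{m+2}$. A short manipulation of the closed formula gives $t(r_1,s_1)=t(r,s)-(n-r)$ and $t(r_1,s_1)-(n-R_0-s)=\sum_{k=0}^{m-1}(n-s-t_k)>0$ (each $t_k<n-s$); thus the distinguished sections of $g^{p^{n-r}}$ are of largest order, whence $\mathrm{ord}\big(g^{p^{n-r}}\big)=p^{t(r_1,s_1)}$, attained in~$G_{m+3}$, and $\mathrm{ord}(g)=p^{n-r}\cdot p^{t(r_1,s_1)}=p^{t(r,s)}$, attained in~$G_{m+3}$. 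Running the same recursion inside~$G$ gives $\mathrm{ord}_G(g)\le p^{t(r,s)}$, which together with $\mathrm{ord}_G(g)\ge\mathrm{ord}_{G_{m+3}}(g)=p^{t(r,s)}$ yields the stated equality in both~$G$ and~$G_{m+3}$.

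The main obstacle is the section computation: one must pin down exactly which coordinates of $\psi\big(g^{p^{n-r}}\big)$ carry a copy of~$b^{jp^s}$ and check that the $a$-exponent accumulated there equals $jp^sS[r]$ modulo~$p^n$ — this is where the index set $\{p^r,2p^r,\ldots,p^n-p^r\}$ of~(\ref{eq:S}) enters. The rest is bookkeeping: matching the shifted sequence $t_1,\ldots,t_m$ to the data of~$(r_1,s_1)$ in order to identify $m_{r_1,s_1}=m-1$, and verifying the inequality $t(r_1,s_1)>n-R_0-s$ that makes the $b^{jp^s}$-carrying sections dominate the pure $a$-power sections.
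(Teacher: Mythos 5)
Your proposal is correct and follows essentially the same route as the paper: compute the first-level sections of $(a^{ip^r}b^{jp^s})^{p^{n-r}}$, observe that the components in positions a multiple of $p^r$ are conjugates of $a^{jp^sS[r]}b^{jp^s}$ while the rest are powers of $a^{jp^{s+R_0}}$, and iterate via the shifted data $(r_1,s_1)=(t_0+s,s)$ with $m_{r_1,s_1}=m-1$. The paper unrolls this recursion explicitly over $m$ steps rather than phrasing it as an induction on $m$, but the key computation, the dominance check against the $a$-power sections, and the verification that the order is already attained in $G_{m+3}$ are the same.
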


\begin{proof}
Clearly the order of $a^{ip^r}b^{jp^s}$ is a multiple of $p^{n-r}$. We have  
\begin{equation}\label{eq order first step}
\begin{split}
&\psi\bigg((a^{ip^r}b^{jp^s})^{p^{n-r}}\bigg)=\psi\bigg((b^{jp^s})^{a^{(p^{n-r}-1)ip^r}}\cdots (b^{jp^s})^{a^{2ip^r}}(b^{jp^s})^{a^{ip^r}}b^{jp^s}\bigg)\\
&=\bigg((a^{jp^{s+R_0}})^*,\overset{p^{r}-1}\ldots,(a^{jp^{s+R_0}})^*, a^{jp^sS[r]}(b^{jp^s})^{a^{jp^s(e_{ip^r}+e_{2ip^r}+\cdots +e_{p^r})}},\\
&\qquad\! (a^{jp^{s+R_0}})^*,\overset{p^{r}-1}\ldots,(a^{jp^{s+R_0}})^*, a^{jp^sS[r]}(b^{jp^s})^{a^{jp^s(e_{ip^r}+e_{2ip^r}+\cdots +e_{2p^r})}},\,\ldots\,,\,\ldots\,,\\
&\qquad 
(a^{jp^{s+R_0}})^*,\overset{p^{r}-1}\ldots,(a^{jp^{s+R_0}})^*,a^{jp^sS[r]}(b^{jp^s})^{a^{jp^s(e_{ip^r}+e_{2ip^r}+\cdots+e_{(p^{n-r}-1)p^r})}},\\
&\qquad (a^{jp^{s+R_0}})^*,\overset{p^{r}-1}\ldots,(a^{jp^{s+R_0}})^*,a^{jp^sS[r]}b^{jp^s}\bigg)
\end{split}
\end{equation}
where  the $*$ denotes an  unspecified exponent, and the last equality follows from the fact that the set $\{ip^r,2ip^r,\ldots,(p^{n-r}-1)ip^r,p^n\}$ coincides, modulo $p^n$, with $\{p^r,2p^r,\ldots,(p^{n-r}-1)p^r,p^n\}$. So the components of $\psi((a^{ip^r}b^{jp^s})^{p^{n-r}})$ in positions a multiple of $p^r$ are conjugates of $a^{jp^sS[r]}b^{jp^s}$ and the other components are powers of $a^{jp^{s+R_0}}$ whose order is at most $p^{n-s-R_0}$. 

Suppose that $p^{n-s}\mid S[r]$. Then using (\ref{eq order first step}), we have 
\[
\begin{split}
&\psi\bigg((a^{ip^r}b^{jp^s})^{p^{n-r}}\bigg)=\bigg((a^{jp^{s+R_0}})^*,\overset{p^{r}-1}\ldots,(a^{jp^{s+R_0}})^*, (b^{jp^s})^{a^{jp^s(e_{ip^r}+e_{2ip^r}+\cdots +e_{p^r})}}, \,\ldots\, ,\,\\
& \qquad\qquad\qquad\qquad\qquad\qquad\qquad\qquad\qquad\qquad\qquad (a^{jp^{s+R_0}})^*,\overset{p^{r}-1}\ldots,(a^{jp^{s+R_0}})^*,b^{jp^s}\bigg).
\end{split}
\]
Note that the order of $b^{jp^s}$ is $p^{n-s-R_0}$. Thus, the order of $a^{ip^r}b^{jp^s}$ in~$G$ is  $p^{t(r,s)}$, for
\[
t(r,s)=2n-(r+s+R_0).
\]
Also since $(b^{jp^s})^{p^{n-s-R_0-1}}\not\in \st_G(2)$, it follows that $(a^{ip^r}b^{jp^s})^{p^{t(r,s)-1}}\not\in \st_G(3)$. Thus the element $a^{ip^r}b^{jp^s}$ is of order $p^{t(r,s)}$ in~$G_3$ as well.

It remains to settle the case $p^{n-s}\nmid S[r]$, which we will now assume till the end of the proof. Since by hypothesis $p^{t_0+s+1}\nmid jp^s{S[r]}$, the order of $a^{jp^sS[r]}b^{jp^s}$ is  a multiple of $p^{n-(t_0+s)}$. Applying $\psi$ we get
\begin{align*}
&\psi_2^{p^n}\bigg((a^{ip^r}b^{jp^s})^{p^{n-r}p^{n-(t_0+s)}}\bigg)=\psi\bigg((a^{jp^sS[r]}b^{jp^s})^{p^{n-(t_0+s)}}\bigg)\\
&=\psi\bigg((b^{jp^s})^{a^{(p^{n-(t_0+s)}-1)jp^sS[r]}}\cdots (b^{jp^s})^{a^{2jp^sS[r]}}(b^{jp^s})^{a^{jp^sS[r]}}b^{jp^s}\bigg).
\end{align*}
As before, 
it follows that 
$\{jp^sS[r],2jp^sS[r],\ldots,(p^{n-(t_0+s)}-1)jp^sS[r],p^n\}$
coincides, modulo~$p^n$, with the set $\{p^{t_0+s},2p^{t_0+s},\ldots,p^n-p^{t_0+s
},p^n\}$. This means that the $b$'s appear in all positions a multiple of $p^{t_0+s}$ and only in these positions, 
and we can write each of the elements in these positions as 
\[
a^{jp^sS[t_0+s]}(b^{jp^s})^{a^{jp^s\big(e_{jp^sS[r]}+ e_{2jp^sS[r]}+\cdots +e_{\ell p^{t_0+s}}\big)}}
\]
for certain $\ell\in\{1,\ldots,p^{n-(t_0+s)}-1\}$.
Since we are interested in the order of these elements, up to conjugation the order of these elements coincides with the order of the element $a^{jp^sS[t_0+s]}b^{jp^s}$. 
In the other components of $\psi((a^{jp^sS[r]}b^{jp^s})^{p^{n-(t_0+s)}})$, there is a power of $a^{jp^{s+R_0}}$ whose order is at most $p^{n-s-R_0}$. By hypothesis 
$p^{t_1+s+1}\nmid p^sS[t_0+s]$ thus the order of $a^{jp^sS[t_0+s]}b^{jp^s}$ is at least $p^{n-(t_1+s)}$. 
Proceeding in this way, after $m-1$ further steps we find
\begin{align*}
&\psi_{m+1}^{p^{mn}}\bigg((a^{ip^r}b^{jp^s})^{p^{n-r}p^{n-(t_0+s)}p^{n-(t_1+s)}\cdots p^{n-(t_{m-1}+s)}}\bigg)
\\
&=\psi\bigg((b^{jp^s})^{a^{(p^{n-(t_{m-1}+s)}-1)jp^sS[t_{m-2}+s]}}\cdots (b^{jp^s})^{a^{2jp^sS[t_{m-2}+s]}} (b^{jp^s})^{a^{jp^s S[t_{m-2}+s]}}(b^{jp^s})\bigg)\\
&= \bigg((a^{jp^{s+R_0}})^*,\overset{p^{t_{m-1}}-1}\ldots,(a^{jp^{s+R_0}})^*,a^{jp^s S[t_{m-1}+s]}(b^{jp^s})^{a^*},(a^{jp^{s+R_0}})^*,\overset{p^{t_{m-1}}-1}\ldots,(a^{jp^{s+R_0}})^*,\\
&\qquad a^{jp^sS[t_{m-1}+s]}(b^{jp^s})^{a^*},\,\ldots\,,\, \ldots\,,
(a^{jp^{s+R_0}})^*,\overset{p^{t_{m-1}}-1}\ldots,(a^{jp^{s+R_0}})^* ,a^{jp^s S[t_{m-1}+s]}(b^{jp^s})\bigg)\\
&= \bigg((a^{jp^{s+R_0}})^*,\overset{p^{t_{m-1}}-1}\ldots,(a^{jp^{s+R_0}})^*,(b^{jp^s})^{a^*},\,\,\ldots\,\,,\,(a^{jp^{s+R_0}})^*,\overset{p^{t_{m-1}}-1}\ldots,(a^{jp^{s+R_0}})^* ,b^{jp^s}\bigg)
\end{align*}
where the last equality holds since $p^{t_{m}+s}\mid p^{s}S[t_{m-1}+s]$, thus $p^n\mid p^s S[t_{m-1}+s]$ by the definition of $t_{m}$.  Since by hypothesis $j\not\equiv 0 \pmod p$ it follows that the order of the elements in positions a multiple of $p^{t_{m-1}}$ is $p^{n-s{-R_0}}$ and the orders of the other elements  are at most $p^{n-s-R_0}$. This proves that the order of $a^{ip^r}b^{jp^s}$ is $p^{t(r,s)}$, where 
\[
t(r,s)=(m+2)n-(t_0+\cdots+t_{m-1}+s(m+1)+r+R_0).
\]

Finally, let us prove that the element $(a^{ip^r}b^{jp^s})^{p^{t(r,s)-1}}\not \in \St_G(m+3)$, which implies that $a^{ip^r}b^{jp^s}$ is of order $p^{t(r,s)}$ in $G_{m+3}$. We observe that 
\begin{align*}
&\psi_{m+1}^{p^{mn}}\bigg((a^{ip^r}b^{jp^s})^{p^{t(r,s)-1}}\bigg)=\bigg((a^{jp^{n-1}})^*,\overset{p^{t_{m-1}}-1}\ldots,(a^{jp^{n-1}})^*,(b^{jp^{n-R_0-1}})^{a^*},\\
&\qquad\qquad\qquad\qquad\qquad\qquad\qquad\qquad \,\ldots\,,\,\ldots\,,(a^{jp^{n-1}})^*,\overset{p^{t_{m-1}}-1}\ldots,(a^{jp^{n-1}})^* ,b^{jp^{n-R_0-1}}\bigg).
\end{align*}
Since the components $(b^{jp^{n-R_0-1}})^{a^*}$ are non-trivial in $G_2$, the result follows.
\end{proof}

In the next three results, we identify a sort of branching structure within certain subgroups of GGS-groups. For convenience, we set $R_{-1}=0$.

\begin{lemma}
\label{lem:fractal}
Let $G=\langle a,b\rangle$  be a  GGS-group acting on the $p^n$-adic tree, for $p$ a  prime and $n\in\mathbb{N}$. Then  for $j\in\{0,1,\ldots,n-1\}$, we have $\varphi_v(\textup{st}_G(v))\ge\langle a^{p^{R_{j-1}}},b\rangle$ for any vertex~$v=u_1\cdots u_j\in A^j$ of length~$j$, where $u_i\equiv 0 \pmod {p^{R_{i-2}}}$ for $i\in\{1,\ldots,j\}$.
\end{lemma}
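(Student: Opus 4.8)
The plan is to induct on the length~$j$ of the vertex~$v$. For $j=0$ the vertex is the root, so $\textup{st}_G(\varnothing)=G$ and $\varphi_\varnothing$ is the identity, and the assertion reduces to $G\ge\langle a^{p^{R_{-1}}},b\rangle=\langle a,b\rangle$, which holds since $R_{-1}=0$. For the inductive step, write $v=wu_j$ with $w=u_1\cdots u_{j-1}$; the congruences imposed on $u_1,\ldots,u_{j-1}$ are precisely those needed to apply the case $j-1$ to the vertex~$w$, yielding $H:=\varphi_w(\textup{st}_G(w))\ge\langle a^{p^{R_{j-2}}},b\rangle=:H_0$ (here $H$ is a genuine subgroup of $\Aut(T_{u_w})\cong\Aut(T)$, being the image of a homomorphism, so the inductive hypothesis applies to it). Since sections compose, $\varphi_v(g)=\varphi_{u_j}\bigl(\varphi_w(g)\bigr)$ for $g\in\textup{st}_G(v)$, and $\varphi_w$ carries $\textup{st}_G(v)$ onto $\textup{st}_H(u_j)$; hence $\varphi_v(\textup{st}_G(v))=\varphi_{u_j}(\textup{st}_H(u_j))\ge\varphi_{u_j}(\textup{st}_{H_0}(u_j))$, and it suffices to prove $\varphi_{u_j}(\textup{st}_{H_0}(u_j))\ge\langle a^{p^{R_{j-1}}},b\rangle$.

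For this I would use conjugates of~$b$ by powers of $a^{p^{R_{j-2}}}\in H_0$. Recall $\psi(b)=(a^{e_1},\ldots,a^{e_{p^n-1}},b)$, so the section of~$b$ at a first-level vertex~$\ell$ is $a^{e_\ell}$ for $1\le\ell\le p^n-1$ and is~$b$ for $\ell=p^n$. The rooted automorphism $a^{p^{R_{j-2}}}$ acts on the first level as $(1\,2\,\cdots\,p^n)^{p^{R_{j-2}}}$, whose orbits are the residue classes modulo $p^{R_{j-2}}$; since $p^{R_{j-2}}\mid u_j$ by hypothesis, the orbit of~$u_j$ consists of all multiples of $p^{R_{j-2}}$, and in particular contains~$p^n$. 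Thus for each~$\ell$ divisible by $p^{R_{j-2}}$ there is an integer~$k$ with $a^{kp^{R_{j-2}}}$ sending~$\ell$ to~$u_j$, and then $b^{a^{kp^{R_{j-2}}}}$ lies in $\textup{st}_{H_0}(1)\le\textup{st}_{H_0}(u_j)$ with section at~$u_j$ equal to the section of~$b$ at~$\ell$, that is, $a^{e_\ell}$ if $\ell\ne p^n$ and~$b$ if $\ell=p^n$. Hence $\varphi_{u_j}(\textup{st}_{H_0}(u_j))$ contains~$b$ and every~$a^{e_\ell}$ with $\ell\in\{p^{R_{j-2}},2p^{R_{j-2}},\ldots,p^n-p^{R_{j-2}}\}$, and the subgroup of $\langle a\rangle\cong\Z/p^n\Z$ generated by those~$e_\ell$ is $\langle a^{p^{R_{j-1}}}\rangle$ by the definition of~$R_{j-1}$; this gives $\varphi_{u_j}(\textup{st}_{H_0}(u_j))\ge\langle a^{p^{R_{j-1}}},b\rangle$ and closes the induction.

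I expect the only point genuinely requiring the hypothesis to be the orbit computation above: it is exactly $p^{R_{j-2}}\mid u_j$ that places~$u_j$ in the same $\langle a^{p^{R_{j-2}}}\rangle$-orbit as the distinguished first-level vertex~$p^n$ carrying the section~$b$, which is what lets a conjugate of~$b$ deposit a full copy of~$b$, rather than just a power of~$a$, at~$u_j$; without it the generator~$b$ is lost and the recursion cannot be iterated. Everything else is bookkeeping with the section and conjugation formulas of Section~\ref{sec:preliminaries} --- in particular that $b^{a^t}\in\textup{st}_G(1)$ with $\varphi_{u_j}(b^{a^t})=\varphi_\ell(b)$ whenever $a^t$ carries~$\ell$ to~$u_j$, and that $\varphi_w(\textup{st}_G(w))$ is a subgroup to which the inductive hypothesis applies verbatim. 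One tacitly uses $R_{j-2}<n$ throughout, which holds whenever~$R_{j-1}$ is defined, and certainly when~$G$ is infinite.
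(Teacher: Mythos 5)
Your argument is correct and is essentially the paper's own proof, written out in full: the paper likewise checks lengths $1$ and $2$ by taking sections of the conjugates $b^{a^{kp^{R_{j-2}}}}$ and noting that the sections of $b$ at the multiples of $p^{R_{j-2}}$ generate $\langle a^{p^{R_{j-1}}},b\rangle$, and then says "the result follows recursively". Your version just makes the induction, the surjectivity of $\varphi_w\colon \st_G(v)\to\st_H(u_j)$, and the orbit computation explicit.
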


\begin{proof}
For a vertex~$v$ of length one, the result is clear from considering the sections of $b,b^a,\ldots,b^{a^{p^n-1}}$, which are the generators of $\textup{st}_G(v)$. As the subgroup generated by the sections of~$b$ is $\langle a^{p^{R_0}},b\rangle$, it follows that for a vertex~$w$ of length~$2$, $$b,b^{a^{p^{R_0}}},\ldots,b^{a^{p^n-p^{R_0}}}\in \varphi_v(\textup{st}_G(w)),
$$
where $v$ is the prefix of~$w$ of length~$1$. It then follows that for vertices $w=u_1u_2$, with $u_1,u_2\in A$ such that $u_2\equiv 0\pmod {p^{R_0}}$, we have $\varphi_w(\textup{st}_G(w))\ge\langle a^{p^{R_1}},b\rangle$. The result now follows recursively.
\end{proof}

\begin{lemma}
\label{lem:branching-new}
Let $G=\langle a,b\rangle$  be a  GGS-group acting on the $p^n$-adic tree, for $p$ a  prime and $n\in\mathbb{N}$. Let $\alpha\in\{0,\ldots,n-1\}$ and let $p^R$ be the highest power of $p$ dividing $e_{ip^{\alpha}}$ for all $i\in\{1,\ldots, p^{n-\alpha}-1\}$. Suppose that  
\begin{enumerate}
\item the highest power of $p$ dividing $e_{jp^{\alpha+1}}$ for all $j\in\{1,\ldots, p^{n-\alpha-1}-1\}$ is strictly greater than $p^R$, and 
\item there exists $\ell\in\{1,\ldots, p^{n-\alpha}-1\}$ such that $p^{R+1}\mid e_{\ell p^{\alpha}}$.
\end{enumerate}
Then, writing $N_{\alpha}=\langle a^{p^{\alpha}},b\rangle$ and $N_R=\langle a^{p^{R}},b\rangle$, we have
\[
1 \times \overset{p^{\alpha}-1}\cdots \times  1\times \gamma_3(N_R) \times \cdots \, \cdots \times 1 \times \overset{p^{\alpha}-1}\cdots \times  1\times \gamma_3(N_R) = \psi\big(\gamma_3(\st_{N_{\alpha}}(1))\big).
\]
\end{lemma}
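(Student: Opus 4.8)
The plan is to work out the first-level decomposition of $\st_{N_{\alpha}}(1)$ explicitly, deduce the easy inclusion by reading off coordinates, and then obtain the reverse inclusion by producing, one coordinate at a time, enough elements of $\gamma_3(\st_{N_{\alpha}}(1))$.

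First I would note that $N_{\alpha}=\langle a^{p^{\alpha}},b\rangle$ decomposes as $\langle a^{p^{\alpha}}\rangle\ltimes\st_{N_{\alpha}}(1)$, since $a^{p^{\alpha}}$ acts on the first layer as a fixed-point-free product of $p^{\alpha}$ disjoint $p^{n-\alpha}$-cycles while $b\in\St_{\Aut(T)}(1)$; hence $\st_{N_{\alpha}}(1)$ is the normal closure of $b$ in $N_{\alpha}$ and is generated by the elements $c_k:=b^{a^{kp^{\alpha}}}$, $k=0,1,\dots,p^{n-\alpha}-1$. From $\psi(b)=(a^{e_1},\dots,a^{e_{p^n-1}},b)$ one reads off that $\psi(c_k)$ has the section $b$ in the coordinate $kp^{\alpha}$ (coordinates taken modulo $p^n$, so that $c_0$ carries its $b$ in coordinate $p^n$) and a power of $a$, namely $a^{e_{\,j-kp^{\alpha}}}$, in every other coordinate $j$. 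Therefore in a coordinate $j\not\equiv 0\pmod{p^{\alpha}}$ every generator contributes a power of $a$, so the projection of $\psi(\st_{N_{\alpha}}(1))$ there lies in the abelian group $\langle a\rangle$; while in a coordinate $j\equiv 0\pmod{p^{\alpha}}$ the contributions are $b$ together with the powers $a^{e_{ip^{\alpha}}}$ for $i=1,\dots,p^{n-\alpha}-1$, and since by definition $p^R$ is exactly the highest power of $p$ dividing all of $e_{p^{\alpha}},e_{2p^{\alpha}},\dots,e_{p^n-p^{\alpha}}$, these generate $\langle a^{p^R},b\rangle=N_R$, so the projection there is all of $N_R$.

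The inclusion $\subseteq$ now follows: projecting to a coordinate $j\not\equiv 0\pmod{p^{\alpha}}$ kills $\gamma_3$ because the image is abelian, and projecting to a coordinate $j\equiv 0\pmod{p^{\alpha}}$ gives $\gamma_3$ of the image $N_R$. In particular $\gamma_3(\st_{N_{\alpha}}(1))$ is trivial in every coordinate that is not a multiple of $p^{\alpha}$, so if $\rho$ denotes the projection of $\psi$ onto the coordinates that are multiples of $p^{\alpha}$, nothing is lost in passing to $\rho$, and the statement reduces to proving $\gamma_3(Q)=\gamma_3(N_R)^{\,p^{n-\alpha}}$, where $Q:=\rho(\st_{N_{\alpha}}(1))\le N_R^{\,p^{n-\alpha}}$ is the explicit subgroup generated by the $\rho(c_k)$; one inclusion here is automatic, and it remains to prove $\gamma_3(N_R)^{\,p^{n-\alpha}}\le\gamma_3(Q)$.

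For this I would use that $\gamma_3(\st_{N_{\alpha}}(1))$ is normal in $N_{\alpha}$ and that conjugation by $a^{p^{\alpha}}$ permutes the multiple-of-$p^{\alpha}$ coordinates in a single $p^{n-\alpha}$-cycle; since $\rho$ is onto $N_R$ in each coordinate, $N_R$-conjugates are available coordinatewise, so it suffices to exhibit, for each of the generators $[[a^{p^R},b],a^{p^R}]$ and $[[a^{p^R},b],b]$ of $\gamma_3(N_R)$, one element of $\gamma_3(\st_{N_{\alpha}}(1))$ that equals that generator in a single prescribed coordinate and is trivial in all the other multiple-of-$p^{\alpha}$ coordinates; the cyclic symmetry and products then yield the whole direct product. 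The building blocks are the iterated commutators $[[c_i,c_j],c_k]$: a direct computation with the formula for $\psi(c_k)$ shows each of these is supported on only the two coordinates $ip^{\alpha}$ and $jp^{\alpha}$, with explicit $\gamma_3(N_R)$-entries there. The task is then to cancel one of the two surviving coordinates by forming suitable products of such commutators and their $a^{p^{\alpha}}$-conjugates, and this is precisely where hypotheses (1) and (2) are needed: the definition of $R$ provides an index $i_0$ coprime to $p$ with $v_p(e_{i_0p^{\alpha}})=R$ (coprimality to $p$ being forced by hypothesis (1)), hypothesis (2) provides a further index $\ell$ with $v_p(e_{\ell p^{\alpha}})\ge R+1$, and hypothesis (1) forces $v_p(e_{ip^{\alpha}})\ge R+1$ for every $i$ divisible by $p$; balancing these valuations against one another is what lets the off-coordinate entries cancel while the entry in the target coordinate still runs over a generating set of $\gamma_3(N_R)$ rather than only a proper subgroup. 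I expect this bookkeeping — choosing the right commutators and products so that everything cancels outside one coordinate, and verifying that the residue is a genuine generator of $\gamma_3(N_R)$ — to be the main obstacle; the $\subseteq$ inclusion and the reductions above are otherwise routine.
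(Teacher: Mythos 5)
Your setup is sound and matches the framework the paper works in: the identification of $\st_{N_\alpha}(1)$ as the normal closure of $b$ generated by the $c_k=b^{a^{kp^\alpha}}$, the observation that the coordinate projections land in $\langle a\rangle$ (abelian) off the multiples of $p^\alpha$ and are all of $N_R$ on them, the resulting inclusion $\subseteq$, and the reduction via normality and the $a^{p^\alpha}$-cycle to exhibiting elements of $\gamma_3(\st_{N_\alpha}(1))$ whose $\rho$-image is supported on a single coordinate with entry a generator of $\gamma_3(N_R)$. The observation that $[c_i,c_j,c_k]$ is supported on at most the two coordinates $ip^\alpha$ and $jp^\alpha$ is also correct and is exactly the paper's starting point (with $i=0$, $j=k$ for a suitable $k\not\equiv 0\bmod p$ with $v_p(e_{kp^\alpha})=R$, so that the entries in coordinate $kp^\alpha$ are $[a^{p^R},b,a^{p^R}]$ and $[a^{p^R},b,b]$).

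However, the proof stops exactly where the real content begins. Everything after ``the task is then to cancel one of the two surviving coordinates'' is a statement of intent, not an argument, and this cancellation is the entire point of the lemma and of its two hypotheses. Concretely: writing $e_{(p^{n-\alpha}-k)p^\alpha}=\chi p^R$, the second surviving coordinate of $[b,b^{a^{kp^\alpha}},b]$ carries the error term $[b,a^{\chi p^R},b]$, and there is no single commutator that kills it; the paper cancels it by multiplying with a conjugate of another triple commutator, which reintroduces an error term in a different coordinate, and one must check that the new error term involves a strictly higher power of $p$ in $a$ or $b$ so that the iteration terminates after finitely many steps at $a^{p^n}=b^{p^{n-R_0}}=1$. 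Moreover, when $p\nmid\chi$ this does not work directly, and the paper needs a separate combinatorial claim --- that hypothesis (ii) supplies an $\ell$ with $p^{R+1}\mid e_{\ell p^\alpha}$ which can additionally be chosen so that $v_p(e_{(\ell-k)p^\alpha})=R$, proved by contradiction using $k\not\equiv 0\bmod p$ --- before assembling bespoke commutators such as $[(b^\theta)^{a^{kp^\alpha}},b^\nu,(b^{\theta\nu})^{a^{(k-\ell)p^\alpha}}]$ whose off-coordinate entries cancel while the target entry, together with the others produced, still generates $\gamma_3(N_R)$ modulo the already-obtained elements. None of this is routine bookkeeping that can be waved through; as it stands the proposal establishes only the inclusion $\subseteq$ and leaves the reverse inclusion unproved.
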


\medskip

Note that if $\alpha=n-1$, condition (i) is vacuous.

\begin{proof}
By assumption (i), there exists $k\in\{1,\ldots,p^{n-\alpha}-1\}$ with $k\not\equiv 0 \pmod p$ such that $e_{kp^\alpha}=\kappa p^{R}$ for some $\kappa\not\equiv 0\pmod p$. Up to  replacing $b$ with a suitable power of itself, we may assume without loss of generality that $\kappa=1$.
It suffices to show, since we can conjugate by~$a^{p^{\alpha}}$, that 
\[
1\times\overset{kp^{\alpha}-1}\cdots \times 1\times \gamma_3(N_R)\times 1\times \cdots \times 1\le\psi\big(\gamma_3(\st_{N_{\alpha}}(1))\big).
\]

\underline{Case 1:} Suppose $e_{(p^{n-\alpha}-k)p^{\alpha}}=0$. Then, as $\gamma_3(N_R)=\langle [a^{p^R},b,a^{ p^R}], [a^{ p^R},b,b]\rangle^{N_R}$, 
the result follows from
\begin{align*}
\psi\Big([b,b^{a^{kp^{\alpha}}},b]\Big)&=(1,\overset{kp^{\alpha}-1}\ldots,1,[a^{p^R},b,a^{ p^R}],1,\ldots,1)\\
\psi\Big([b,b^{a^{kp^{\alpha}}}, b^{a^{kp^{\alpha}}}]\Big)&=(1,\overset{kp^{\alpha}-1}\ldots,1, [a^{ p^R},b,b],1,\ldots,1)
\end{align*}
and using Lemma~\ref{lem:fractal}.

\underline{Case 2:} Suppose $e_{(p^{n-\alpha}-k)p^{\alpha}}\ne 0$ and let $\chi\in\{1,\ldots,p^{n-R}-1\}$ be such that $e_{(p^{n-\alpha}-k)p^{\alpha}}=\chi p^R$. 

If $p\mid \chi$, consider
\begin{align*}\psi\Big([b,b^{a^{kp^{\alpha}}},b]\Big)&=(1,\overset{kp^{\alpha}-1}\ldots,1,[a^{ p^R},b,a^{ p^R}],1,\ldots,1, [b,a^{\chi p^R},b]),\\
\psi\Big([b,b^{a^{kp^{\alpha}}},b^{a^{kp^{\alpha}}}]\Big)&=(1,\overset{kp^{\alpha}-1}\ldots,1,[a^{ p^R},b,b],1,\ldots,1,[b,a^{\chi p^R},a^{\chi p^R}]).
\end{align*}
Let us refer to those commutators in the final component as the \emph{error terms}. We proceed to cancel those error terms by respectively multiplying with the following:
\begin{align*}
&\psi\Big(\big([b^{a^{kp^{\alpha}}},b^{\chi},b^{a^{kp^{\alpha}}}]^{-1}\big)^{a^{-kp^\alpha}}\Big)=
(1,\ldots,1, [a^{\chi p^R},b^{\chi},a^{\chi p^R}]^{-1},1,\overset{kp^{\alpha}-1}\ldots,1,[b,a^{\chi p^R},b]^{-1}),\\
&\psi\Big(\big([b^{a^{kp^{\alpha}}},b^{\chi},b^{\chi}]^{-1}\big)^{a^{-kp^\alpha}}\Big)=
(1,\ldots,1,[a^{\chi p^R},b^{\chi},b^{\chi}]^{-1},1,\overset{kp^{\alpha}-1}\ldots,1,[b,a^{\chi p^R},a^{\chi p^R}]^{-1}).
\end{align*}
This introduces a new set of error terms, this time in the $(p^n-kp^\alpha)$th component. We notice that the new set of error terms  involve a higher $p$-power of $a$ or $b$. Hence, after repeating this process a finite number of times, we will eventually have a set of error terms consisting of  commutators with at least one component being the trivial element $a^{p^n}=b^{p^{n-R_0}}$. In other words, we have completely cancelled any error terms, showing that
\begin{align*}
(1,\overset{kp^{\alpha}-1}\ldots,1,[a^{ p^R},b,a^{p^R}],1,\ldots,1)&\in\psi\big(\gamma_3(\st_{N_\alpha}(1))\big),\\
(1,\overset{kp^{\alpha}-1}\ldots,1,[a^{ p^R},b,b],1,\ldots,1)&\in\psi\big(\gamma_3(\st_{N_\alpha}(1))\big).
\end{align*}

Now suppose that $p\nmid \chi$. 
By hypothesis, there exists  $\ell\in\{1,\ldots,p^{n-\alpha}-1\}$ such that $e_{\ell p^{\alpha}}=\lambda p^R$ with $p\mid \lambda $. We claim that we may choose $\ell$ such that $e_{(\ell-k)p^{\alpha}}=\xi p^R$ with $p\nmid \xi$. Indeed, suppose on the contrary that for all choices of~$\ell$ satisfying condition (ii), we have $p^{R+1}\mid e_{(\ell-k)p^\alpha}$. Since $k\not\equiv 0\pmod p$, repeatedly replacing $\ell$ with $\ell-k$, this means that $e_{jp^\alpha}\equiv 0 \pmod {p^{R+1}}$ for all $j\in\{1,\ldots,p^{n-\alpha}-1\}$, a contradiction. Hence the claim. For convenience, we write  $e_{(k-\ell)p^{\alpha}}=\zeta p^R$, and let $\nu\in\{1,\ldots, p^{n-R}-1\}$ be such that $\nu\xi\equiv \chi \pmod{p^{n-R}}$.  Then we have 
\begin{align*}
\psi\Big([b,(b^\nu)^{a^{(k-\ell)p^\alpha}},b ]^{-1}\Big)&=(1,\overset{(k-\ell) p^{\alpha}-1}\ldots,1,[a^{\zeta p^R},b^{\nu},a^{\zeta p^R}]^{-1},1,\ldots,1, [b,a^{\chi p^R},b]^{-1}),\\
\psi\Big([b,b^{a^{kp^{\alpha}}},(b^\nu)^{a^{(k-\ell)p^\alpha}} ]^{-1}\Big)&=(1,\overset{kp^{\alpha}-1}\ldots,1,[a^{p^R},b,a^{\nu\lambda p^R}]^{-1},1,\ldots,1, [b,a^{\chi p^R},a^{\chi p^R}]^{-1}).
\end{align*}
If $p\mid \zeta$, we proceed to cancel the error terms as in the above argument. If $p\nmid \zeta$, let $\theta\in\{1,\ldots,p^{n-R}-1\}$ be such that $\theta\chi\equiv \zeta \pmod {p^{n-R}}$. Then, since $p\mid \lambda$, we can use the following element to proceed:
\[
\psi\Big([(b^\theta)^{a^{kp^{\alpha}}},b^\nu,(b^{\theta\nu})^{a^{(k-\ell)p^\alpha}} ]\Big)=
(1,\overset{kp^{\alpha}-1}\ldots,1,[b^\theta,a^{\nu p^R},a^{\theta\nu\lambda p^R}],1,\ldots,1, [a^{\zeta p^R},b^\nu,a^{\zeta p^R}]).\qedhere
\]
\end{proof}

\begin{lemma}\label{lem:branching}
Let $G=\langle a,b\rangle$  be a GGS-group acting on the $p^n$-adic tree, for $p$ an odd prime and $n\in\mathbb{N}$. Let $R\le n-1$ be 
such that $e_{k p^{R}}\equiv 0\pmod {p^R}$ for all $k\in\{1,\ldots,p^{n-R}-1\}$ and suppose that $e_{k p^{R}}\not\equiv 0\pmod {p^{R+1}}$ for all $k\in\{1,\ldots,p^{n-R}-1\}$. 
Suppose 
further that $S[R]\equiv 0 \pmod {p^{R+1}}$.
Then, writing $N=\langle a^{p^{R}},b\rangle$, we have
\[
1 \times \overset{p^{R}-1}\cdots \times  1\times \gamma_3(N) \times \cdots \, \cdots \times 1 \times \overset{p^{R}-1}\cdots \times  1\times \gamma_3(N)=\psi\big(\gamma_3(\st_N(1))\big).
\]
\end{lemma}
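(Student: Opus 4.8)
The plan is to prove the two inclusions separately; throughout, write $b_k=b^{a^{kp^R}}$ for $0\le k\le p^{n-R}-1$, so that $\st_N(1)=\langle b_0,\dots,b_{p^{n-R}-1}\rangle$, and call a first-level vertex a \emph{good vertex} if its label is a multiple of~$p^R$; the good vertices are permuted cyclically by conjugation with~$a^{p^R}$. For the inclusion $\psi(\gamma_3(\st_N(1)))\subseteq 1\times\overset{p^{R}-1}{\cdots}\times 1\times\gamma_3(N)\times\dots\times 1\times\overset{p^{R}-1}{\cdots}\times 1\times\gamma_3(N)$, note that since $v_p(e_{kp^R})=R$ for every $k\in\{1,\dots,p^{n-R}-1\}$, at each good vertex~$v$ exactly one of the $b_k$ has section~$b$ while the remaining ones have sections that are powers of~$a$ generating together $\langle a^{p^R}\rangle$, whereas at a non-good vertex the section of every $b_k$ is a power of~$a$. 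Consequently any element of $\gamma_3(\st_N(1))$ has trivial section at every non-good vertex (the relevant sections there are pairwise commuting powers of~$a$), and at a good vertex~$v$ its section lies in $\varphi_v(\gamma_3(\st_N(1)))=\gamma_3(\varphi_v(\st_N(1)))=\gamma_3(N)$, because $\varphi_v(\st_N(1))=\langle a^{p^R},b\rangle=N$; in particular $\psi(\st_N(1))$ projects onto~$N$ at each good vertex.

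For the reverse inclusion I would first reduce to a single block. As $\st_N(1)\trianglelefteq N$, conjugation by~$a^{p^R}$ normalises $\gamma_3(\st_N(1))$ and permutes the good vertices cyclically, so it suffices to place $\gamma_3(N)$ in the last block with~$1$ elsewhere. Using that $\psi(\st_N(1))$ projects onto~$N$ at the last vertex and that $\gamma_3(N)=\langle[a^{p^R},b,a^{p^R}],[a^{p^R},b,b]\rangle^{N}$, it is then enough to exhibit elements of $\gamma_3(\st_N(1))$ mapping under~$\psi$ to $(1,\dots,1,[a^{p^R},b,a^{p^R}])$ and $(1,\dots,1,[a^{p^R},b,b])$.

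These I would build from triple commutators such as $[b_k,b_0,b_k]$, $[b_0,b_k,b_0]$, $[b_k,b_0,b_0]$, $[b_0,b_k,b_k]$ and their conjugates by~$a^{p^R}$ and by~$b$ (here Lemma~\ref{lem:fractal} is available to realise the needed sections): each such element has $\psi$-image supported on just two blocks, one carrying a conjugate of the wanted generator of~$\gamma_3(N)$ and the other carrying an \emph{error term} $[b,a^{\kappa p^R},b]^{\pm1}$ or $[b,a^{\kappa p^R},a^{\kappa p^R}]^{\pm1}$ with~$\kappa$ a unit modulo~$p^{n-R}$. One then cancels the errors: multiplying a suitable family of these elements so that their error blocks coincide, the errors superpose to a commutator whose $a$-exponent is, modulo deeper terms, a unit multiple of~$S[R]$, and since $S[R]\equiv0\pmod{p^{R+1}}$ this exponent has $p$-valuation at least~$R+1$; from this point the residual errors involve ever higher powers of~$a$, and—exactly as in the proof of Lemma~\ref{lem:branching-new}—iterating the cancellation raises these $p$-valuations at every round until the exponents become multiples of~$p^n$, hence trivial, leaving the desired element. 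The hypothesis that~$p$ is odd is used both to align blocks ($2k\not\equiv0\pmod{p^{n-R}}$) and for the near-multilinearity of triple commutators modulo~$\gamma_4(N)$. The main obstacle is precisely this error cancellation: in Lemma~\ref{lem:branching-new} one could use a vanishing or extra-$p$-divisible~$e_{\ell p^\alpha}$ to make a single error small or absent, but here every $e_{kp^R}$ is a unit times~$p^R$, so no individual error is small and the only available lever is the global relation $S[R]\equiv0\pmod{p^{R+1}}$; arranging the product of commutators so that exactly $\sum_k e_{kp^R}$ appears in the error exponent, and verifying that the ensuing reduction terminates, is the delicate step, the remainder being routine bookkeeping with sections.
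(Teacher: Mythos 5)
Your easy inclusion, and the reduction (via conjugation by $a^{p^R}$) to producing elements of $\psi\big(\gamma_3(\st_N(1))\big)$ of the form $(1,\ldots,1,[a^{p^{R}},b,a^{p^{R}}])$ and $(1,\ldots,1,[a^{p^{R}},b,b])$ supported in a single block, are fine and agree with the paper. The gap lies in the error-cancellation mechanism you propose for the hard inclusion, and it is a genuine one. Writing $e_{kp^R}=i_kp^R$ with each $i_k$ a unit mod $p$, the commutators you start from, e.g.\ $w_k=[b^{a^{kp^{R}}},b,b]$, carry their wanted term at position $p^n$ and their error term $[b,a^{e_{kp^{R}}},a^{e_{kp^{R}}}]$ at position $kp^{R}$: the error positions are pairwise distinct as $k$ varies, so the errors never sit in a common block and cannot ``superpose'' into an exponent $S[R]$. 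For any fixed pair of support positions the error exponent is a unit multiple of a single $e_{kp^{R}}$, which by hypothesis has $p$-valuation exactly $R$, so the iteration of Lemma~\ref{lem:branching-new} never gets started: in that lemma the initial gain in valuation comes from an individual entry that is extra $p$-divisible, and here no such entry exists. If instead you try to trivialise the error in each block by taking $\prod_k w_k^{c_k}$ with $p\mid c_k$, the wanted entry becomes $[a^{p^{R}},b,b]^{\sum_k c_ki_{-k}}$ modulo $\gamma_4(N)$ with $\sum_k c_ki_{-k}\equiv 0\pmod p$, so it no longer generates $\gamma_3(N)$ modulo $\gamma_4(N)$; wanted and error exponents are tied to the same units and degenerate together. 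The obstruction is sharpest in the ``geometric'' case $i_j\equiv\lambda^{\,j-1}\pmod p$: there any word in the $b^{a^{kp^{R}}}$ lying in $\st_N(1)$ with trivial section at the target block has section of the form $b^{c}a^{p^{R+1}\cdot *}$ at the neighbouring block, so commutators of the shape $[b,b^{a^{p^{R}}},X]$ can never isolate $[a^{p^{R}},b,a^{p^{R}}]$ modulo deeper terms, whatever cancellation scheme is applied.

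The paper resolves this quite differently, following \cite{FAZR2}: it splits according to whether $i_\ell^2-i_{\ell-1}i_{\ell+1}\not\equiv 0\pmod p$ for some $\ell$, and in each case constructs an auxiliary element of $\st_N(1)$ --- a product of powers of the $b^{a^{kp^{R}}}$ such as $\big((b^{i_\ell})^{a^{p^{R}}}b^{-i_{\ell-1}}\big)^{a^{-\ell p^{R}}}$ in the first case, or $b(b^{a^{p^{R}}})^{-\lambda}$ and $b^{\chi}(b^{-1})^{a^{p^{R}}}$ in the geometric case --- whose section at the would-be error position of $[b,b^{a^{p^{R}}}]$ is \emph{trivial} and whose section at the target position is useful ($a^{p^{R}}$, $b$, or the pair $a^{\chi p^{R}}b^{-1}$, $ba^{-p^{R}+\nu p^{R+1}}$, which generates $N$). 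Using such an element as the third entry of the triple commutator, no error terms arise at all. The hypothesis $S[R]\equiv 0\pmod{p^{R+1}}$ enters only once, to force $\lambda\not\equiv 1\pmod p$ in the geometric case, which is exactly what makes $\langle a^{\chi p^{R}}b^{-1},\, ba^{-p^{R}+\nu p^{R+1}}\rangle=N$. You have correctly located the delicate step, but the lever you propose --- aggregating the errors into $S[R]$ --- cannot be engaged with the commutators you list; the missing idea is an element with trivial section at the error position.
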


\begin{proof}
Let $e_{kp^{R}}=i_k p^{R}$ for $k,i_k\in\{1,\ldots,p^{n-R}-1\}$ with $i_k\not\equiv 0\pmod p$. 
Further, by replacing $b$ with an appropriate power, we may assume that $i_1=1$. As reasoned in the previous proof, it suffices to show  
that 
\[
1\times\overset{p^{R}-1}\cdots \times 1\times \gamma_3(N)\times 1\times \cdots \times 1\le\psi\big(\gamma_3(\st_N(1))\big).
\]
We proceed as in~\cite[Lem.~3.2]{FAZR2} by 
 considering the following two cases: 
\begin{enumerate}
    \item[(a)] There exists an $\ell \in\{2,\ldots,p^{n-R}-2\}$ such that $i_{\ell}^2-i_{\ell-1}i_{\ell+1}\not \equiv 0\pmod p$.
 \item[(b)] For all $\ell\in\{2,\ldots,p^{n-R}-2\}$, we have $i_{\ell}^2-i_{\ell-1}i_{\ell+1} \equiv 0\pmod p$.
\end{enumerate}
Note that if $p=3$ and $R=n-1$, then Case (b) trivially holds.

Suppose we are in Case (a). Let us define $g_{\ell}$ as follows:
\[
g_{\ell}=\big((b^{i_{\ell}})^{a^{p^{R}}}b^{-i_{\ell-1}}\big)^{a^{-\ell p^{R}}}
\]
Writing $\mu = i_{\ell}^2-i_{\ell-1}i_{\ell+1}$, we have
\[
\psi(g_{\ell})=(*, \overset{p^{R}-1}\ldots, *,a^{\mu p^{R}},*,\ldots,*,1)
\]
where $*$ represents unspecified elements.
By assumption, we have $\mu\not \equiv 0\pmod p$. Hence there is a power~$g$ of~$g_\ell$ satisfying
\[
\psi(g)= (*, \overset{p^{R}-1}\ldots, *, a^{p^{R}}, *,\ldots, *,1).  
\]
Furthermore,  for simplicity let $\lambda:=i_{p^{n-R}-1} \not\equiv 0\pmod p$. Then
\[
\psi\Big(b^{a^{p^{R}}} (b^{a^{-p^{R}}})^{-\lambda}\Big)=(*, \overset{p^{R}-1}\ldots, *, ba^{-\lambda e_{2p^{R}}}, *,\ldots, *,1),  
\]
and multiplying by an appropriate power of~$g$ we obtain an element $h\in \st_N(1)$ with
\[
\psi(h)=(*, \overset{p^{R}-1}\ldots, *, b, *,\ldots, *,1).
\]
Then the result follows from:
\begin{align*}
\psi\Big([b,b^{a^{p^{R}}},g]\Big)&=(1,\overset{p^{R}-1}\ldots,1,[a^{p^{R}},b,a^{p^{R}}],1,\ldots,1)\\
\psi\Big([b,b^{a^{p^{R}}},h]\Big)&=(1,\overset{p^{R}-1}\ldots,1,[a^{p^{R}},b,b],1,\ldots,1)
\end{align*}

We now  suppose that we are in Case~(b). It then follows, for $j\in\{2,\ldots,p^{n-R}-1\}$, that $i_{j}=\lambda^{j-1}$ for some $\lambda{\not\equiv 0\pmod p}$. However, as $S[R]\equiv 0 \pmod {p^{R+1}}$, we  have $\lambda\not\equiv 1\pmod p$. Further we have $\lambda^{p^{n-R}-1}p^{R}\equiv p^{R} \pmod {p^{R+1}}$.

Note that
\begin{align*}
    &\psi\Big( b (b^{a^{p^{R}}})^{-\lambda}\Big)=\\
 &\qquad\qquad(*, \overset{p^{R}-1}\ldots, *, a^{p^{R}}b^{-\lambda},*, \overset{p^{R}-1}\ldots, *, 1, \,\ldots \, \ldots, *, \overset{p^{R}-1}\ldots, *, 1,*, \overset{p^{R}-1}\ldots, *, ba^{-p^{R}+{\nu p^{R+1}}} )
   \end{align*}
for some $\nu$,    and 
$*$ represents some power of~$a$. Let $\chi$ be such that $\lambda\chi\equiv 1\pmod {p^{n-R_0}}$; recall that $p^{n-R_0}$ is the order of~$b$. Then
  \begin{align*}
   & \psi\Big( b^\chi\big(b^{-\chi\lambda}\big)^{a^{p^R}}\Big)=\psi\Big( b^\chi \big(b^{-1}\big)^{a^{p^R}}\Big)= \\
 &\qquad\quad(*, \overset{p^{R}-1}\ldots, *,a^{\chi p^{R}}b^{-1}, *, \overset{p^{R}-1}\ldots, *, 1,\,\ldots \, \ldots, *, \overset{p^{R}-1}\ldots, *, 1,*, \overset{p^{R}-1}\ldots, *, b^\chi *  ).
\end{align*}
Thus, since $\chi\not\equiv 1\pmod p$ and hence $$
N=\langle a^{\chi p^{R}}b^{-1}, ba^{-p^{R}+\nu p^{R+1}}   \rangle=\langle a^{(\chi-1)p^R+\nu p^{R+1}} , ba^{-p^{R}+\nu p^{R+1}}   \rangle,$$
the result follows from the observations below:
\begin{align*}
    \psi\Big( \big[b^{(1-\nu p)},b^{a^{p^{R}}} ,b^{a^{p^{R}}} (b^{a^{2p^{R}}})^{-\lambda}\big] \Big)&=(1, \overset{p^{R}-1}\ldots, 1, [a^{p^{R}-\nu p^{R+1}} , b, ba^{-p^{R}+\nu p^{R+1}}  ],1,\ldots,1),\\
     \psi\Big( \big[ (b^{a^{2p^{R}}})^{\lambda}   ,b^{a^{p^{R}}} ,b^\chi \big(b^{-\chi\lambda}\big)^{a^{p^{R}}}\big] \Big)&=(1, \overset{p^{R}-1}\ldots, 1, [a^{p^{R}-\nu p^{R+1}} , b, a^{\chi p^{R}}b^{-1} ],1,\ldots,1). \qedhere
\end{align*}
\end{proof}

Next, we record some properties of certain finite GGS-groups 
acting on the $2^n$-adic tree, which will be needed in Section~\ref{sec:exceptional}.

\begin{lemma}
\label{lem:stab-trivial}
Let $G$ be a periodic GGS-group acting on the $2^n$-adic  tree  for $n\ge 2$, and assume that $R_0=n-1$. Then $\St_G(4)$ is trivial. Furthermore $\St_G(3)$ is non-trivial if and only if $e_i = e_{2^{n-1} + i}$ for all $i\in\{1,\ldots, 2^{n-1} - 1\}$.
\end{lemma}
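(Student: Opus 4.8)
The plan is to analyse the sections of a general element of $\St_G(m)$ for $m=3,4$ using the recursive structure, exploiting the fact that when $R_0=n-1$ we have $\psi(b)=(a^{e_1},\dots,a^{e_{2^n-1}},b)$ with every $e_i\in\{0,2^{n-1}\}$ (since $2^{n-1}\mid e_i$ but $2^n\nmid e_i$ for at least one index, and the others are multiples of $2^{n-1}$, hence $0$ or $2^{n-1}$). In particular $a^{2^{n-1}}$ has order $2$ and $b$ has order $2$, so $G$ is generated by elements whose first-level sections are extremely restricted. First I would pin down $\St_G(1)$: it is generated by the conjugates $b^{a^i}$, and $\psi$ sends these into $\langle a^{2^{n-1}}\rangle^{2^n-1}\times\langle b\rangle$ placed cyclically. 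From this one reads off that $\varphi_\omega(\St_G(1))$ for a first-level vertex $\omega$ lands inside $\langle a^{2^{n-1}},b\rangle$, a group all of whose elements have sections that are powers of $a^{2^{n-1}}$ or equal to $b$; iterating, $\St_G(2)$ already has all second-level sections in $\langle a^{2^{n-1}}\rangle$ except possibly one copy of $b$, and $\St_G(3)$ has all third-level sections trivial or equal to $b$, while $\St_G(4)$ forces even those to vanish. Making this precise gives $\St_G(4)=1$.

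For the triviality of $\St_G(4)$, the key computation is: take $g\in\St_G(3)$; then $\phi_3(g)=(g_1,\dots,g_{2^n})$ with each $g_i\in\St_G(2)$, and by the previous paragraph each $g_i$ has all its second-level sections equal to powers of $a^{2^{n-1}}$, except that the section at the deepest directed vertex can be a power of $b$. Descending once more, each third-level section of $g$ is a power of $a^{2^{n-1}}$ or of $b$; since $g\in\St_G(3)$ all of these powers of $a^{2^{n-1}}$ are trivial, so $g$ is determined by its value $b^{\varepsilon}$ at the single vertex $u_{2^n 2^n 2^n}$. But an element whose only nontrivial third-level section is $b$ does not fix the fourth level (as $b\notin\St_G(1)$ acts nontrivially on level $1$ of its subtree, i.e.\ level $4$ of $T$). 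Hence $g\in\St_G(4)$ forces $\varepsilon=0$, so $g=1$.

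For the second statement, I would compute $\St_G(3)$ explicitly. Working in $G$, an element of $\St_G(2)$ has, after applying $\phi_2$, all coordinates in $\langle a^{2^{n-1}}\rangle$ except the coordinate at $u_{2^n 2^n}$ which lies in $\langle a^{2^{n-1}},b\rangle$; intersecting with $\St_G(3)$ kills the $\langle a^{2^{n-1}}\rangle$ parts and leaves only an element whose sole nontrivial second-level section is some power of $b$, located at $u_{2^n2^n}$. So $\St_G(3)$ is nontrivial precisely when $G$ contains an element $g\in\St_G(1)$ with $\psi(g)=(1,\dots,1,b^{\delta})$ for some $\delta\in\{0,1\}$, $\delta\ne0$ — equivalently, when the element $b b^{a^{2^{n-1}}}$ (or a suitable product of conjugates of $b$) has trivial section at every first-level vertex except the last. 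Now $\psi(b)=(a^{e_1},\dots,a^{e_{2^n-1}},b)$ and $\psi(b^{a^{2^{n-1}}})=(a^{e_{2^{n-1}+1}},\dots,a^{e_{2^n-1}},b,a^{e_1},\dots,a^{e_{2^{n-1}-1}},b^{a^{?}})$; comparing, the product $b^{-1}b^{a^{2^{n-1}}}\in\St_G(1)$ has first-level sections $a^{e_{2^{n-1}+i}-e_i}$ at $u_i$ for $i=1,\dots,2^{n-1}-1$, the section $b^{-1}a^{\cdots}$ around the middle, and so on — and the condition that all the non-directed sections vanish is exactly $e_i=e_{2^{n-1}+i}$ for all $i\in\{1,\dots,2^{n-1}-1\}$. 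One then checks that when this holds the resulting element is genuinely nontrivial (its single surviving section is a nontrivial power of $b$), and when it fails every candidate element has a nontrivial $a^{2^{n-1}}$-section at some lower vertex, hence lies outside $\St_G(3)$.

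The main obstacle I anticipate is bookkeeping: correctly tracking which coordinate of $\psi$ or $\phi_2$ carries the ``directed'' $b$-part versus the $a^{2^{n-1}}$-parts as one conjugates by powers of $a$ and multiplies conjugates of $b$, and verifying non-triviality of the surviving section rather than merely that the ``generic'' coordinates vanish. In particular, one must be careful that a putative nontrivial element of $\St_G(3)$ is not accidentally trivial because $a^{2^{n-1}}$ and $b$ have order $2$ and various cancellations occur; this is where the hypothesis $e_i=e_{2^{n-1}+i}$ enters sharply, and where I would spell out the short commutator/conjugate computation in full rather than leaving it to the reader.
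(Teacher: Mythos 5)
Your overall strategy --- confining the first-level sections of elements of $\St_G(1)$ to $\langle a^{2^{n-1}},b\rangle$, hence the sections of deeper stabilisers to the Klein four-group $\langle b, b^{a^{2^{n-1}}}\rangle$, and extracting the condition $e_i=e_{2^{n-1}+i}$ from the sections of $bb^{a^{2^{n-1}}}$ --- is the same as the paper's. But the execution contains a genuine error that breaks the proof of $\St_G(4)=1$: you assert that $b\notin\St_G(1)$ and hence that a section equal to $b$ at a third-level vertex moves the fourth level. This is false: $b$ is by definition a directed automorphism lying in $\St_{\Aut(T)}(1)$, so a copy of $b$ sitting at level $3$ fixes level $4$ and only moves level $5$. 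As written, your final step would at best prove $\St_G(5)=1$. The correct obstruction sits one level higher: a nontrivial $g\in\St_G(3)$ has some first-level section equal to $bb^{a^{2^{n-1}}}$, whose own first-level sections include $b$ itself; since some $e_j=2^{n-1}\neq 0$, the element $b$ does not lie in $\St_G(2)$, so $bb^{a^{2^{n-1}}}\notin\St_G(3)$ and hence $g\notin\St_G(4)$. Equivalently, it is the nontrivial rooted sections $a^{e_j}$ appearing at level $3$ underneath the level-$2$ copies of $b$ that prevent $g$ from fixing level $4$, not the level-$3$ copies of $b$. (Your intermediate assertion that ``since $g\in\St_G(3)$ all of these powers of $a^{2^{n-1}}$ are trivial'' is also a non sequitur: membership in $\St_G(3)$ constrains the labels up to level $2$, not the level-$3$ sections.)

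Second, your bookkeeping of where the $b$'s live is wrong, and this propagates into the characterisation of $\St_G(3)$. The element $bb^{a^{2^{n-1}}}$ carries $b$ in \emph{two} first-level positions, namely $2^{n-1}$ and $2^n$ (using $e_{2^{n-1}}=0$, which follows from periodicity), and any of the $2^n$ first-level sections of $g\in\St_G(3)$ may equal it; so the copies of $b$ are not confined to the rightmost path, and $g$ is certainly not determined by its value at the single vertex $u_{2^n2^n2^n}$. Relatedly, your criterion ``$\St_G(3)\neq 1$ iff there is $g\in\St_G(1)$ with $\psi(g)=(1,\ldots,1,b^{\delta})$'' is incorrect: such a $g$ lies in $\St_G(2)\setminus\St_G(3)$, because its last section $b$ is not in $\St_G(2)$. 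The correct criterion is the existence of a nontrivial element of $\St_G(1)$ all of whose first-level sections lie in $\{1, bb^{a^{2^{n-1}}}\}$; this forces $bb^{a^{2^{n-1}}}\in\St_G(2)$, which is where your (correct) computation of $e_i=e_{2^{n-1}+i}$ belongs. Finally, for the converse you never exhibit an actual nontrivial element of $\St_G(3)$; one can take $[b,b^{a^j}]$ for any $j$ with $e_j=2^{n-1}$, since $[b,a^{2^{n-1}}]=bb^{a^{2^{n-1}}}$ and all other components of $\psi([b,b^{a^j}])$ are trivial or equal to $bb^{a^{2^{n-1}}}$.
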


\begin{proof}
Note that as $R_0=n-1$, we have that $2^{n-1}$ divides each $e_i$. Thus the $e_i$'s are $0$ or~$2^{n-1}$. Since $G$ is periodic, it follows that $e_{2^{n-1}}=0$ and hence  $\big\langle b, b^{a^{2^{n-1}}} \big\rangle\cong C_2\times C_2$. 

Let $g$ be an element in $\St_G(4)$. Certainly $\varphi_{\omega}(g)\in \St_G(1)$ for all $\omega\in A$. Since 
\[
\psi( \St_G(1)) \cap \St_G(1) \times \cdots \times \St_G(1)\subseteq \big\langle b, b^{a^{2^{n-1}}} \big\rangle\times \overset{2^n}\cdots\times \big\langle b, b^{a^{2^{n-1}}} \big\rangle,
\]
it follows that 
\[
\varphi_{\omega}(g)\in\big\{1, b, b^{a^{2^{n-1}}},  bb^{a^{2^{n-1}}}\big\}.
\]
As $\big\langle b, b^{a^{2^{n-1}}} \big\rangle \cap\St_G(3)=1$, the first claim follows.

For the second statement, let $g$ be a non-trivial element in $\st_G(3)$, in particular $\varphi_{\omega}(g)\in\st_G(2)$ for all $\omega\in A$. From the previous paragraph, it follows that $\varphi_{\omega}(g)\in\big\{1, bb^{a^{2^{n-1}}}\big\}$. Thus there exists $\omega\in A$ such that $\varphi_{\omega}(g)=bb^{a^{2^{n-1}}}$. We observe that
\[
\psi\Big(bb^{a^{2^{n-1}}}\Big) = (a^{e_1 + e_{2^{n-1} +1}}, \ldots, a^{e_{2^{n-1}-1} + e_{2^n -1}} , b, a^{e_1 + e_{2^{n-1} +1}}, \ldots ,a^{e_{2^{n-1}-1} + e_{2^n -1}}, b),
\]
so  $\varphi_{\omega}(g)\in\St_G(2)$  if and only if $e_i = e_{2^{n-1} + i}$ for all $i\in\{1,\ldots, 2^{n-1} - 1\}$, as required.
\end{proof}

\begin{lemma}\label{lem:nilpotent}
Let $G$ be a periodic GGS-group acting on the $2^n$-adic  tree  for $n\ge 2$.  Suppose $R_0=n-1$ and $S[0]\equiv 0\pmod {p^n}$. 
Then $\gamma_{2^n+1}(G)\le \st_G(1)'$. Furthermore $[b,a,\overset{2^{n-1}}\ldots,a]^2=1$ and $[b,a,\overset{2^{n-1}+i}\ldots,a]^2\in \gamma_{2^n+i+2}(G)$  for all $i\in\mathbb{N}$.
\end{lemma}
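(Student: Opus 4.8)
The plan is to prove the three assertions in order, the second being the technical heart.

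\textbf{First assertion.} I would pass to the metabelian quotient $\bar G = G/\st_G(1)'$ and set $A = \st_G(1)/\st_G(1)'$, a normal abelian subgroup with $\bar G/A \cong G/\st_G(1)\cong C_{2^n}$ generated by the image $\bar a$ of $a$. Since $R_0 = n-1$, the element $b$ has order $2^{n-R_0} = 2$; as $\st_G(1)$ is generated by the conjugates $b^{a^i}$, the group $A$ is generated by involutions, hence is an $\mathbb{F}_2$-vector space on which $\bar a$ acts by conjugation with $\bar a^{2^n} = 1$. Thus $A$ is a module over $\mathbb{F}_2[\langle\bar a\rangle]\cong\mathbb{F}_2[t]/(t^{2^n}-1) = \mathbb{F}_2[t]/((t-1)^{2^n})$, the last step by the Frobenius identity in characteristic $2$. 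Since $\bar G = \langle\bar a\rangle A$ is metabelian and $A$ is generated as a module by $\bar b$, one gets $\gamma_2(\bar G) = A(\bar a-1)$ (the normal closure of $[\bar a,\bar b] = \bar b(\bar a-1)$) and then, inductively, $\gamma_{k+1}(\bar G) = \gamma_k(\bar G)(\bar a-1) = A(\bar a-1)^k$ for all $k\ge1$, because conjugation by $A$ acts trivially on $A$. As $(\bar a-1)^{2^n}$ annihilates $A$, this yields $\gamma_{2^n+1}(\bar G)=1$, that is, $\gamma_{2^n+1}(G)\le\st_G(1)'$.

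\textbf{Second assertion.} Let $c = a^{2^{n-1}}$, the unique involution of $\langle a\rangle$. The same computation identifies the image of $[b,a,\overset{k}\ldots,a]$ in $\bar G$ with $\bar b(\bar a-1)^k$, and for $k = 2^{n-1}$ the Frobenius identity rewrites this as $\bar b(\bar c-1) = [\bar b,\bar c]$; hence $[b,a,\overset{2^{n-1}}\ldots,a] = [b,c]\,w$ for some $w\in\st_G(1)'$. Since this element lies in $\gamma_{2^{n-1}+1}(G)\le G'\le\St_G(1)$ and $\psi$ is injective on $\St_G(1)$, it is enough to show each of its first-level sections has order at most $2$. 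I would argue: (a) periodicity forces $e_{2^{n-1}} = 0$ (as $S[n-1] = e_{2^{n-1}}$ and $e_{2^{n-1}}\in\{0,2^{n-1}\}$ by $R_0 = n-1$), and then computing $\psi([b,c]) = \psi(b)\psi(b^c)$ directly shows $[b,c]$ has section $b$ in positions $2^{n-1}$ and $2^n$ and section $a^{e_j+e_{j+2^{n-1}}}\in\{1,c\}$ elsewhere, whence $[b,c]^2 = 1$ and $[b,c]$ is centralised by both $b$ and $c$; (b) all first-level sections of elements of $\st_G(1)$ lie in $\langle b,c\rangle$, and since $[\psi(b^{a^i})_\ell,\psi(b^{a^j})_\ell] = [\psi(b)_{\ell-i},\psi(b)_{\ell-j}]\in\{1,[b,c]\}$ and $\langle b,c\rangle$ centralises $[b,c]$, every first-level section of every element of $\st_G(1)'$ lies in $\{1,[b,c]\}$; (c) hence $\psi([b,c]w)_\ell = \psi([b,c])_\ell\,\psi(w)_\ell$ lies in $\{b,b^c\}$, in $\{1,[b,c]\}$, or in $\{c,c[b,c]\}$, and in each case has order at most $2$ (using $b[b,c] = b^c$ and $(c[b,c])^2 = c^2[b,c]^2 = 1$).

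\textbf{Third assertion.} Write $u = [b,a,\overset{2^{n-1}}\ldots,a]$ and $v_i = [b,a,\overset{2^{n-1}+i}\ldots,a] = [u,a,\overset{i}\ldots,a]\in\gamma_{2^{n-1}+i+1}(G)$. For $i = 1$, I would use the identity $[x,y]^2 = [x,x^y]$ (valid when $x^2 = 1$) with $x = u$ (an involution by the second assertion) and $y = a$: since $u^a = uv_1$, this gives $v_1^2 = [u,u^a] = [u,v_1]\in[\gamma_{2^{n-1}+1}(G),\gamma_{2^{n-1}+2}(G)]\le\gamma_{2^n+3}(G)$. For $i\ge2$, induct: from $v_i = v_{i-1}^{-1}v_{i-1}^a$ one computes $v_i^2 = v_{i-1}^{-2}\,(v_{i-1}^a)^{v_{i-1}^{-1}}\,v_{i-1}^a$, and since $[v_{i-1}^a,v_{i-1}^{-1}]\in\gamma_{2(2^{n-1}+i)}(G)\le\gamma_{2^n+i+2}(G)$ for $i\ge2$, this is congruent modulo $\gamma_{2^n+i+2}(G)$ to $v_{i-1}^{-2}(v_{i-1}^2)^a = [v_{i-1}^2,a]$; the inductive hypothesis $v_{i-1}^2\in\gamma_{2^n+i+1}(G)$ puts $[v_{i-1}^2,a]$ in $\gamma_{2^n+i+2}(G)$, and hence $v_i^2\in\gamma_{2^n+i+2}(G)$.

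\textbf{Expected main obstacle.} The delicate point is the second assertion: $[b,a,\overset{2^{n-1}}\ldots,a]$ is only known modulo $\st_G(1)'$, and a general representative $[b,c]w$ need not be an involution. The way around this is the structural observation that all first-level sections of $\st_G(1)'$ lie in the cyclic subgroup $\langle[b,c]\rangle$, which is centralised by $\langle b,c\rangle$; with the explicit form of $\psi([b,c])$ this keeps every section of the commutator of order at most $2$, so injectivity of $\psi$ on $\St_G(1)$ forces its square to be trivial.
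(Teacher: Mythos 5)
Your proof is correct, and for the first two assertions it takes a genuinely different route from the paper. For $\gamma_{2^n+1}(G)\le \st_G(1)'$ and for the identification $[b,a,\overset{2^{n-1}}\ldots,a]\equiv [b,a^{2^{n-1}}] \pmod{\st_G(1)'}$, the paper simply quotes a commutator-collection congruence from Leedham-Green--McKay (Prop.~1.1.32(ii)) together with the observations that $(G')^2\le\st_G(1)'$ and $[\st_G(1),\st_G(1)']=1$; your module-theoretic reformulation ($A=\st_G(1)/\st_G(1)'$ as an $\mathbb{F}_2[t]/((t-1)^{2^n})$-module, with $\gamma_{k+1}(\bar G)=A(\bar a-1)^k$ and the Frobenius identity $(t-1)^{2^{n-1}}=t^{2^{n-1}}-1$) proves the same facts in a self-contained way. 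Where your argument genuinely adds value is the second assertion: the paper's one-line justification only reduces $[b,a,\overset{2^{n-1}}\ldots,a]^2$ to $[a^{2^{n-1}},b]^{\pm 2}$ and leaves the triviality of the latter implicit (it amounts to $(a^{2^{n-1}}b)^4=1$, which ultimately rests on $e_{2^{n-1}}=0$), whereas you make exactly this point explicit via the first-level sections of $[b,a^{2^{n-1}}]$ and the observation that every section of $\st_G(1)'$ lies in $\{1,[b,a^{2^{n-1}}]\}$; this is the more careful treatment. Your third assertion is essentially the paper's argument: the identity $1=[u^2,a]=v_1^2[v_1,u]^{*}$ for $i=1$ and the same commutator bookkeeping for the inductive step. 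The only cosmetic remark is that, like the paper, you never actually invoke the hypothesis $S[0]\equiv 0\pmod{2^n}$, which is carried along for the later lemmas.
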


\begin{proof}
As $\st_G(1)/\st_G(1)'$ is elementary abelian, we have  $(G')^2\le \st_G(1)'$.
Therefore,  from~\cite[Prop.~1.1.32(ii)]{McKay}, we have 
\[
1=[a^{2^n},b]\equiv[b,a,\overset{2^n}\ldots,a] \pmod {\st_G(1)'},
\]
which gives $\gamma_{2^n+1}(G)\le \st_G(1)'$, as required. 

Also from~\cite[Prop.~1.1.32(ii)]{McKay}, we obtain
\[
[a^{2^{n-1}},b]\equiv [a,b,a,\overset{2^{n-1}-1}\ldots,a] \pmod {\st_G(1)'}
\]
and so $[b,a,\overset{2^{n-1}}\ldots,a]^2=1$, as $\st_G(1)'$ is elementary abelian and $[\st_G(1),\st_G(1)']=1$.

For the last statement, we proceed by induction on $i$. For $i=1$, the statement follows from
\[
1=\big[[b,a,\overset{2^{n-1}}\ldots,a]^2,a\big]=[b,a,\overset{2^{n-1}+1}\ldots,a]^2\big[b,a,\overset{2^{n-1}+1}\ldots,a, [b,a,\overset{2^{n-1}}\ldots,a]\big],
\]
and similarly for the inductive step, as
\[
\big[[b,a,\overset{2^{n-1}+i}\ldots,a]^2,a\big]=[b,a,\overset{2^{n-1}+i+1}\ldots,a]^2\big[b,a,\overset{2^{n-1}+i+1}\ldots,a, [b,a,\overset{2^{n-1}+i}\ldots,a]\big].\qedhere
\]
\end{proof}

\begin{lemma}\label{lem:cyclic}
Let $G$ be a GGS-group satisfying the conditions of  Lemma~\ref{lem:nilpotent}. Then $\gamma_i(G)/\gamma_{i+1}(G)$ is isomorphic to a subgroup of $C_2\times C_2$, for $i\ge 2$.

Furthermore
$|\gamma_i(G):\gamma_{i+1}(G)|\le 2$  for $i>2^n$.
\end{lemma}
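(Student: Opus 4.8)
The plan is to analyse the lower central series of $G$ using the recursive structure. First I would recall the setup of Lemma~\ref{lem:nilpotent}: we have $R_0 = n-1$ and $S[0]\equiv 0 \pmod{2^n}$, so $G$ is a periodic GGS-group on the $2^n$-adic tree with all $e_i\in\{0,2^{n-1}\}$. The key structural facts from Lemma~\ref{lem:nilpotent} are that $\gamma_{2^n+1}(G)\le \st_G(1)'$, that $[b,a,\overset{2^{n-1}}\ldots,a]^2 = 1$, and that $[b,a,\overset{2^{n-1}+i}\ldots,a]^2 \in \gamma_{2^n+i+2}(G)$ for all $i\ge 1$. Since $G/G'\cong C_{2^n}\times C_2$ (as $n - R_0 = 1$), and $G'/\gamma_3(G)$ is generated by images of $[b,a]$ and $[b,b]=1$, one expects $\gamma_i(G)/\gamma_{i+1}(G)$ to be cyclic of order $2$ for small $i$, generated by the image of $[b,a,\overset{i-1}\ldots,a]$, up to possible contributions when the recursive sections re-enter.

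For the first statement ($\gamma_i(G)/\gamma_{i+1}(G) \hookrightarrow C_2\times C_2$ for $i\ge 2$), I would argue that $\gamma_i(G)/\gamma_{i+1}(G)$ is generated, as an abelian group, by basic commutators of weight~$i$ in $a$ and $b$, and that modulo $\gamma_{i+1}(G)$ each such commutator can be rewritten in terms of the left-normed commutators $[b,a,\overset{k}\ldots,a]$ and $[b,b,\ldots]$ type terms. Using $[b,b]=1$ together with the standard fact that in a group where $G/G'$ has the $b$-part of order~$2$, commutators involving two or more copies of $b$ lie deeper (one would invoke that $[b,\ldots]$ with $b$ appearing at least twice is a square-type term, controlled by the squaring relations), the quotient $\gamma_i(G)/\gamma_{i+1}(G)$ is generated by at most two elements, each of order dividing~$2$ since $(G')^2\le \st_G(1)' $ and more precisely since $\gamma_i(G)/\gamma_{i+1}(G)$ is elementary abelian (the squaring relations of Lemma~\ref{lem:nilpotent} push squares into the next term once $i$ is large enough, and for $2\le i\le 2^n$ one checks directly). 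Hence it embeds in $C_2\times C_2$.

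For the second statement ($|\gamma_i(G):\gamma_{i+1}(G)|\le 2$ for $i>2^n$), the point is that once $i > 2^n$ we are inside $\st_G(1)'$ by the first claim of Lemma~\ref{lem:nilpotent}, so we can pass through $\psi$ and use self-similarity: $\gamma_i(G)$ for large $i$ is controlled by $\gamma_{i'}(\langle a^{2^{n-1}},b\rangle)$ in the sections for suitable $i' < i$, and since $a^{2^{n-1}}$ has order~$2$, the group $\langle a^{2^{n-1}},b\rangle$ is generated by two involutions, i.e.\ is a dihedral group, whose lower central series has all factors of order at most~$2$. Concretely I would show $\gamma_i(G)/\gamma_{i+1}(G)$ is generated by the single element $[b,a,\overset{i-1}\ldots,a]\gamma_{i+1}(G)$: the candidate second generator coming from a $b$-heavy commutator is killed because $[b,a,\overset{2^{n-1}}\ldots,a]^2=1$ propagates (via the displayed identities in Lemma~\ref{lem:nilpotent}) to force $[b,a,\overset{2^{n-1}+i}\ldots,a]^2\in\gamma_{2^n+i+2}(G)$, and an induction on~$i$ then collapses any second generator into the next term of the series.

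The main obstacle I anticipate is bookkeeping which basic commutators of a given weight survive modulo the next term of the lower central series, and verifying that the ``$b$ appears at least twice'' commutators really are redundant — this needs a careful use of the collection process together with $[b,b]=1$ and the squaring identities, rather than a slick one-line argument. A secondary subtlety is the boundary case around $i = 2^n$, where one must reconcile the direct (non-recursive) computation of the factors with the recursive/sectional description that kicks in for $i > 2^n$; I would handle $i=2^n$ and $i=2^n+1$ by hand using $\gamma_{2^n+1}(G)\le\st_G(1)'$ and the explicit relation $1=[a^{2^n},b]\equiv[b,a,\overset{2^n}\ldots,a]\pmod{\st_G(1)'}$, and then run the induction for $i\ge 2^n+1$.
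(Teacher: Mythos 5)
Your proposal has a genuine gap at the central point of the lemma. The reason the factor $\gamma_i(G)/\gamma_{i+1}(G)$ can be as large as $C_2\times C_2$ (and no larger) is precisely that the commutators in which $b$ appears twice are \emph{not} redundant: they generate $\st_G(1)'$, and an element of $\st_G(1)'\cap\gamma_i(G)$ can be independent, modulo $\gamma_{i+1}(G)$, of the left-normed commutator $[b,a,\overset{i-1}\ldots,a]$. Your key step --- that commutators involving two or more copies of $b$ ``lie deeper'' and can be discarded --- is therefore wrong in spirit: if it held, you would conclude $|\gamma_i(G):\gamma_{i+1}(G)|\le 2$ for all $i\ge 2$, which is stronger than the lemma claims and is exactly what the $C_2\times C_2$ bound is there to accommodate. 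What is actually needed, and what your outline never establishes, is that $\st_G(1)'$ contributes \emph{at most one} new generator at each step of the lower central series. The paper proves this by showing that $\st_G(1)'$ is generated by a single element $g$ together with its iterated commutators $[g,a],[g,a,a],\ldots$ (equivalently, by the $a$-conjugates of $g$), so that $(\st_G(1)'\cap\gamma_i(G))/(\st_G(1)'\cap\gamma_{i+1}(G))$ has order at most $2$; identifying such a $g$ requires a case distinction between non-symmetric and symmetric defining vectors, with a circulant-matrix rank computation giving $\log_2|\st_G(1)'|=2^n-2^{\lambda}$ in the symmetric case. None of this chain structure appears in your argument. Combined with $\gamma_i(G)=\langle [b,a,\overset{i-1}\ldots,a]\rangle(\st_G(1)'\cap\gamma_i(G))\gamma_{i+1}(G)$ and the order-two observations, this yields the embedding into $C_2\times C_2$.

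For the second statement your route is also off target. The correct (and short) reason is that for $i>2^n$ the left-normed generator $[b,a,\overset{i-1}\ldots,a]$ already lies in $\gamma_{2^n+1}(G)\le\st_G(1)'$, so the whole factor $\gamma_i(G)/\gamma_{i+1}(G)$ is accounted for by the single chain inside $\st_G(1)'$ and hence has order at most $2$. Your alternative --- passing through $\psi$ and comparing with the lower central series of the dihedral section group $\langle a^{2^{n-1}},b\rangle$ --- might be made to work, but the claimed correspondence between $\gamma_i(G)\cap\st_G(1)'$ and the lower central series of the section group is unproved and not automatic; and your statement that the squaring relations of Lemma~\ref{lem:nilpotent} ``kill'' the second generator conflates an element having order $2$ modulo $\gamma_{i+1}(G)$ with its being dependent on the first generator.
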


\begin{proof}
We notice that
\[
\psi(\st_G(1)') \subseteq S:=\Big\{
\big( [b,a^{2^{n-1}}]^{\epsilon_1},\ldots, [b,a^{2^{n-1}}]^{\epsilon_{2^n}}\big)\mid \epsilon_1,\ldots, \epsilon_{2^n}\in\{0,1\}\Big\}.
\]
Since $|S|=2^{2^n}$, it follows that $\log_2|\st_G(1)'|\le 2^n$.

First we consider the case when the defining vector $\mathbf{e}$ is non-symmetric, that is, there exists $j\in\{1,\ldots,2^{n-1}-1\}$ with $e_j \ne e_{2^n-j}$. Then, there exists some $k\in\{1,\ldots,2^n-1\}$ such that
\[
\psi([b,b^{a^k}])=\big(1,\ldots,1,[b,a^{2^{n-1}}]\big)\in\psi(\st_G(1)').
\]
Hence for $\mathbf{e}$ non-symmetric, we have $\log_2|\st_G(1)'|=2^n$, and
\begin{align*}
    \st_G(1)'&=\langle [b,b^{a^k}],[b,b^{a^k}]^a,[b,b^{a^k}]^{a^2},\ldots, [b,b^{a^k}]^{a^{2^n-1}}\rangle\\
    &=\langle [b,b^{a^k}],[b,b^{a^k},a],[b,b^{a^k},a^2],\ldots, [b,b^{a^k},{a^{2^n-1}}]\rangle\\
    &=\langle [b,b^{a^k}],[b,b^{a^k},a],[b,b^{a^k},a,a],\ldots, [b,b^{a^k},a,\overset{2^n-1}\ldots, a]\rangle.
\end{align*}
Indeed, the last equality follows from the more general fact that
\[
\langle h,[h,a],[h,a^2],\ldots, [h,{a^{\ell}}]\rangle = \langle h,[h,a],[h,a,a],\ldots, [h,a,\overset{\ell}\ldots, a]\rangle
\]
for $h\in\st_G(1)'$ and $0\le \ell \le 2^n-1$. This is proved by induction on~$\ell$, using the identity $[h,a^{\ell+1}]=[h,a][h,a^\ell][h,a^\ell,a]$.

From this, we deduce that for each $i\ge 2$, either  $\st_G(1)'\cap\gamma_i(G)=\st_G(1)'\cap\gamma_{i+1}(G)$ or $(\st_G(1)'\cap\gamma_i(G))/(\st_G(1)'\cap\gamma_{i+1}(G))\cong C_2$. As
$$
\gamma_i(G)=\langle [b,a,\overset{i-1}\ldots,a]\rangle (\st_G(1)'\cap \gamma_i(G))\gamma_{i+1}(G),
$$
the result follows, with the final statement coming from the fact that $[b,a,\overset{2^n}\ldots,a]\in\st_G(1)'$.

Now we suppose that $\mathbf{e}$ is symmetric, so  $e_j = e_{2^n-j}$ for all $j\in\{1,\ldots, 2^{n-1}-1\}$.   Then there exists a minimal $k\in\{1,\ldots,2^{n-1}-2\}$ such that 
\[
\big(1,\ldots,1,[b,a^{2^{n-1}}],1,\overset{k-1}\ldots,1,[b,a^{2^{n-1}}],1,\ldots,1\big)\in\psi(\st_G(1)').
\]
Let $g\in\st_G(1)'$ be such an element, chosen such that $g\in\gamma_{\ell}(G)$ with  $\ell$ minimal. We observe that $k=2^{\lambda}$ for some $\lambda\in\{0,1,\ldots,n-2\}$, else $k$ will not be minimal.
Also,
\[
\big(1,\ldots,1,[b,a^{2^{n-1}}]\big)\notin\psi(\st_G(1)').
\]
Looking at  the $2^n\times 2^n$ circulant matrix~$C$ defined by $(1,0,\overset{2^{\lambda}-1}\ldots,0,1,0,\ldots,0)\in(\mathbb{F}_2)^{2^n}$, a direct generalisation of \cite[Lem.~2.7(i)]{FAZR2} yields that the rank of~$C$ is $2^n-2^{\lambda}$, as the polynomial associated to the circulant matrix is $1+x^{2^\lambda}=(1+x)^{2^\lambda}$. Therefore we obtain that $\log_2|\st_G(1)'|=2^n-2^{\lambda}$,
and
\begin{align*}
\st_G(1)'&=\langle g,g^a,g^{a^2},\ldots,g^{a^{2^n-2^\lambda-1}}\rangle\\
&=\langle g,[g,a],[g,a,a],\ldots,[g,a,\overset{2^n-2^\lambda-1}\ldots,a]\rangle.
\end{align*}
The result now follows as in the non-symmetric case.
\end{proof}

\begin{lemma}\label{lem:all-conjugate}
Let $G$ be a GGS-group satisfying the conditions of  Lemma~\ref{lem:nilpotent}. Then $\langle [b,a,\overset{2^n -1}\ldots,a,b]\rangle^G =\{[(ab)^{2^n},g]\mid g\in G\}$.
\end{lemma}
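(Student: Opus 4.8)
The goal is to identify the normal closure of the single commutator $[b,a,\overset{2^n-1}\ldots,a,b]$ with the set of commutators $\{[(ab)^{2^n},g]\mid g\in G\}$. My first move would be to compute $(ab)^{2^n}$ explicitly. Since $a$ has order $2^n$ and $ab\in a\cdot\st_G(1)$, the element $(ab)^{2^n}$ lies in $\st_G(1)$, and applying $\psi$ gives a product of conjugates of $b$ at the $2^n$ first-level sections, in a pattern governed by the cycle of $a$; concretely $(ab)^{2^n}$ should equal a product like $b^{a^{2^n-1}}\cdots b^a b$ up to reordering of components. Using the hypotheses carried over from Lemma~\ref{lem:nilpotent} (namely $R_0=n-1$ and $S[0]\equiv 0\pmod{2^n}$), the ``rooted part'' $a^{S[0]}$ appearing in the section at the last coordinate vanishes, so $(ab)^{2^n}\in\st_G(1)'$ in fact, or at least lies deep in the lower central series. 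I would then pin down which element of $\st_G(1)'$ it is, expecting it to be congruent to $[b,a,\overset{2^n}\ldots,a]$ modulo higher terms, via the commutator collection identity $[a^{2^n},b]\equiv[b,a,\overset{2^n}\ldots,a]\pmod{\st_G(1)'}$ already used in Lemma~\ref{lem:nilpotent}; but here we need one level finer, relating $(ab)^{2^n}$ to $[b,a,\overset{2^n-1}\ldots,a,b]$.

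**Key steps.** First I would establish that $(ab)^{2^n}$ is, up to sign/inverse, congruent to $[b,a,\overset{2^n-1}\ldots,a,b]$ modulo $\gamma_{\ge 2^n+2}(G)$ together with squares of elements of $\gamma_{\ge 2^n}(G)$; by Lemma~\ref{lem:nilpotent} those square terms are themselves absorbed into still-higher terms, and by Lemma~\ref{lem:cyclic} the graded pieces $\gamma_i/\gamma_{i+1}$ beyond degree $2^n$ have order at most $2$, so the tail is very constrained. The cleanest route is to show directly that $\langle(ab)^{2^n}\rangle^G=\langle[b,a,\overset{2^n-1}\ldots,a,b]\rangle^G$ as subgroups: both are contained in $\st_G(1)'$, both are normal, and one can compare them by pushing through $\psi$ and invoking the branching-type description of $\gamma_3$ of the relevant subgroups (Lemmas~\ref{lem:fractal},~\ref{lem:branching-new},~\ref{lem:branching}) — in the $2^n$-adic periodic case with $R_0=n-1$, $\langle[b,a,\overset{2^n-1}\ldots,a,b]\rangle^G$ should be a rank-$\le 2^n$ elementary abelian $2$-group sitting at the bottom, and $(ab)^{2^n}$ generates a $G$-conjugacy-invariant line inside it that, upon taking all $G$-conjugates, fills out the whole normal closure. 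Second, once the normal closure equality $\langle(ab)^{2^n}\rangle^G=\langle[b,a,\overset{2^n-1}\ldots,a,b]\rangle^G=:M$ is in hand, I would prove $M=\{[(ab)^{2^n},g]\mid g\in G\}$. The inclusion $\supseteq$ is immediate since each $[(ab)^{2^n},g]$ lies in the normal closure of $(ab)^{2^n}$. For $\subseteq$, the point is that $M$ is elementary abelian and central (being inside $\st_G(1)'$, which is central in $\st_G(1)$ with $[\st_G(1),\st_G(1)']=1$), so the map $g\mapsto[(ab)^{2^n},g]$ is a homomorphism $G\to M$ whose image is a subgroup; I then check this image is all of $M$ by showing it contains enough conjugates, e.g. $[(ab)^{2^n},a^{-j}]=((ab)^{2^n})^{-1}((ab)^{2^n})^{a^{-j}}$ together with $(ab)^{2^n}\in M=\langle\,((ab)^{2^n})^{a^{j}}:0\le j<2^n\,\rangle$ forces the image to contain $M$ (using that the $a$-conjugates of $(ab)^{2^n}$ generate $M$ and one extra relation, exactly as in the circulant-rank computation of Lemma~\ref{lem:cyclic}).

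**Main obstacle.** The delicate point will be the first step: showing precisely that $(ab)^{2^n}$ and $[b,a,\overset{2^n-1}\ldots,a,b]$ generate the same normal subgroup, rather than merely being congruent modulo some ambiguous tail. The commutator-collection identity only gives congruence modulo $\st_G(1)'$, which is too coarse since both elements live in $\st_G(1)'$; I will need a genuinely finer expansion of $(ab)^{2^n}$ — collecting the $2^n$ conjugates $b^{a^{j}}$ and tracking the second-order commutator terms — and then argue that the correction terms either vanish (by $R_0=n-1$ and $S[0]\equiv 0\pmod{2^n}$, forcing the relevant $e_i$-sums into $2^{n-1}\mathbb{Z}$ and the squares to die by Lemma~\ref{lem:nilpotent}) or lie in $\gamma_{2^n+2}(G)$, which by Lemma~\ref{lem:cyclic} contributes at most a bounded tail that is itself swept up inside the normal closure of the leading term. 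Verifying that this tail does not enlarge or shrink the normal closure is where the real care is needed; I expect to handle it by the same $\psi$-and-circulant bookkeeping used in the proofs of Lemmas~\ref{lem:cyclic} and~\ref{lem:all-conjugate}'s predecessors, exploiting that all these subgroups are elementary abelian and $G$ acts on them through the single generator $a$ as a cyclic shift.
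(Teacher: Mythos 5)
There is a genuine gap, and it starts with a misidentification of the key elements. Writing $h=(ab)^{2^n}$, the collection formula gives $h=[b,a,\overset{2^{n-1}-1}\ldots,a]^2\,[b,a,\overset{2^n-1}\ldots,a]\,c$ with $c\in\gamma_{2^n}(G)\cap\st_G(1)'$; so modulo higher terms $h$ is the weight-$2^n$ commutator $[b,a,\overset{2^n-1}\ldots,a]$, \emph{not} the weight-$(2^n+1)$ element $[b,a,\overset{2^n-1}\ldots,a,b]$ appearing in the statement. The correct relationship --- and the hinge of the whole argument --- is $[h,a]=[h,b]=[b,a,\overset{2^n-1}\ldots,a,b]$, i.e.\ the element whose normal closure you must describe is a commutator \emph{of} $h$, sitting one step deeper in the lower central series than $h$ itself. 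Consequently your proposed first step, proving $\langle h\rangle^G=\langle[b,a,\overset{2^n-1}\ldots,a,b]\rangle^G$, aims at a false target: if $j>2^n$ is maximal with $[h,b]\in\gamma_j(G)$, then $\langle[h,b]\rangle^G\le\gamma_j(G)\le\gamma_{2^n+1}(G)$, whereas $\langle h\rangle^G$ contains $h$, which in general does not lie in $\gamma_{2^n+1}(G)$. The inclusion $\{[h,g]\mid g\in G\}\subseteq\langle[h,b]\rangle^G$ is therefore not ``immediate from membership in the normal closure of $h$''; it requires the explicit expansions $[h,b]^{a^i}=[h,b^{a^i}]$ (valid because $[h,a^i]\in\gamma_{2^n+1}(G)\le\st_G(1)'$) and $[h,a^k]=[h,b][h,b]^a\cdots[h,b]^{a^{k-1}}$, which then give $[h,g]=[h,b]^{a^{\ell_d}}\cdots[h,b]^{a^{\ell_1}}[h,a^{\lambda}]$ for $g=a^{\lambda}b^{a^{\ell_1}}\cdots b^{a^{\ell_d}}$. (Relatedly, your claim that $h\in\st_G(1)'$ is also not right: with $S[0]\equiv 0\pmod{2^n}$ every first-level section of $h$ is a conjugate of $b$, so $h\in\st_G(1)$ but typically $h\notin\st_G(1)'$.)

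The second gap is your appeal to $g\mapsto[h,g]$ being a homomorphism. Since $[h,g_1g_2]=[h,g_2][h,g_1]^{g_2}$, this would require the values to be central in $G$; but $\st_G(1)'$ is central only in $\st_G(1)$ ($[\st_G(1),\st_G(1)']=1$), while conjugation by $a$ permutes the first-level sections and acts nontrivially on it --- indeed the conjugates $[h,b]^{a^i}$ are genuinely distinct generators. So the set $\{[h,g]\mid g\in G\}$ is not a priori a subgroup, and establishing that it coincides with $\langle[h,b],[h,b]^a,\ldots,[h,b]^{a^{2^n-1}}\rangle$ is exactly the content of the displayed expansion above (both inclusions: every $[h,g]$ is such a product, and every product of distinct conjugates is realised by a suitable $g$, using that everything lives in an elementary abelian group). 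Your instincts about the role of the cyclic $a$-action and the circulant bookkeeping from Lemma~\ref{lem:cyclic} are sound, but without the identity $[h,a]=[h,b]=[b,a,\overset{2^n-1}\ldots,a,b]$ and the explicit commutator expansion, neither inclusion of the asserted equality is established.
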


\begin{proof}
Setting $h=(ab)^{2^n}$, we know from ~\cite[Prop.~1.1.32(i)]{McKay} that
\[
h= [b,a,\overset{2^{n-1} -1}\ldots,a]^2 [b,a,\overset{2^n -1}\ldots,a]c
\]
for some $c\in\gamma_{2^n}(G)\cap \st_G(1)'$.
Using Lemma~\ref{lem:nilpotent}, we observe that for all $1\leq i \leq 2^n-1$, we have $[h,a^i] \in \gamma_{2^n+1}(G)\leq \St_G(1)'$.
Furthermore, one can check that
\[
[h,a]=[h,b]=[b,a,\overset{2^n -1}\ldots,a,b].
\]
Let $j>2^n$ be maximal such that $[b,a,\overset{2^n -1}\ldots,a,b]\in\gamma_j(G)$.
Thus, since $\log_2|\st_G(1)'|\le 2^n$ as seen in the proof of Lemma~\ref{lem:cyclic}, we have
\[
\gamma_j(G)=
\langle
[h,b], [h,b,a], \ldots, [h,b, a,\overset{2^n -1}\ldots,a]
\rangle.
\]
Our goal is now to show that
\[
\{[h,g]\mid g\in G\}
=
\langle
[h,b], [h,b,a], \ldots, [h,b, a,\overset{2^n -1}\ldots,a]
\rangle.
\]
To this purpose, as seen in the previous proof it suffices to prove
\begin{equation}
\label{set-subgroup equality}
\{[h,g]\mid g\in G\}
=
\langle
[h,b], [h,b]^a, \ldots, [h,b]^{a^{2^n-1}}
\rangle.
\end{equation}

For any $1\leq i\leq 2^n-1$, we have that
\begin{equation}
\label{conjugate by a^i}
[h,b]^{a^i}=[h^{a^i}, b^{a^i}]=[h[h,a^i], b^{a^i}]
=[h, b^{a^i}],
\end{equation}
where the last equality follows from the fact that $[h,a^i] \in \gamma_{2^n+1}(G)\leq \St_G(1)'$.
Next we note that
\begin{equation}
\label{commutator of x and a^k}
[h, a^k]=[h,b][h,b]^a \cdots[h,b]^{a^{k}-1} 
\end{equation}
for all $1\leq k \leq 2^n-1$.
Now, every element $g\in G$ is of the form $g=a^{\lambda} b^{a^{\ell_1}}\cdots b^{a^{\ell_d}}$ for some $d\ge 0$ and $\lambda, \ell_{1}, \ldots, \ell_d\in\{0,1,\ldots, 2^n-1\}$.
Then taking into account (\ref{conjugate by a^i}) and (\ref{commutator of x and a^k}), we get
\[
[h,g]=[h,b]^{a^{\ell_d}}\cdots [h,b]^{a^{\ell_1}}[h,a^{\lambda}],
\]
from which we deduce that (\ref{set-subgroup equality}) holds. 
\end{proof}


\section{Beauville structures for quotients of infinite GGS-groups \\acting on the $p^n$-adic tree}\label{sec:Beauville}

We are now ready to prove that  certain quotients of $G$ are Beauville groups, for $G$ an  infinite periodic GGS-group acting on the $p^n$-adic tree where $p$ is a prime and $n\ge 2$. 
Recall that for a 2-generator finite group~$H$, the sets $\{x_1,x_2\}$ and $\{y_1,y_2\}$ form a Beauville structure for~$H$, if and only if
\begin{equation}\label{intersection1}
\langle x\rangle \cap \langle y^g\rangle =1
\end{equation}
for all $x\in X=\{x_1,x_2,x_1x_2\}$, $y\in Y=\{y_1,y_2,y_1y_2\}$ and $g\in H$. We also remark that we will only consider $G/\textup{St}_G(k)$ for $k$ large enough such that the order of each element in~$X$ (or in $Y$) is the same in $G$ and in $G/\textup{St}_G(k)$. The reason for this will become apparent when we prove Theorem~\ref{thm:main} later in this section.

Recall $m_{r,s}$ from the beginning of Section~\ref{sec:properties}.  For $G$ an infinite GGS-group, recall that  there exists some $1\le q\le n-1$ such that $R_q=R_{q+1}<n$. We write $p^d$ for the order of $[a^{p^{R_q}},b,a^{p^{R_q}}]b$ in~$G$, and we denote by $\lambda'$ the minimal level where all sections of $([a^{p^{R_q}},b,a^{p^{R_q}}]b)^{p^{d-1}}$ that involve $b$ are of the form $(b^*)^{a^*}$ for some unspecified exponents $*$.  For the rest of this section, we set $m=\max\{ m_{0,0}+\lambda', \, m_{1,0}\}$.

\begin{theorem}\label{thm:highest-nonempty}
Let $G$ be an  infinite periodic GGS-group acting on the $p^n$-adic  tree,  for $p$ a  prime and $n\ge 2$.  Then the quotient $G/\textup{St}_G(m+3)$ is a Beauville group.
\end{theorem}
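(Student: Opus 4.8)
The plan is to exhibit two explicit generating pairs of $\bar G := G/\St_G(m+3)$ and verify the Beauville condition \eqref{intersection1} by controlling the orders of the relevant elements via Lemma \ref{orders} together with the branching structure supplied by Lemmas \ref{lem:fractal}, \ref{lem:branching-new} and \ref{lem:branching}. Since $G/G'\cong C_{p^n}\times C_{p^{n-R_0}}$, a pair $\{u,v\}$ generates $\bar G$ exactly when it generates modulo $G'\St_G(m+3)$; the natural candidates are built from $a$ and $b$. I would take as a first generating pair something like $\{x_1,x_2\}=\{a,\,ab\}$ (or $\{a,b\}$) and as a second pair $\{y_1,y_2\}=\{ab^{\epsilon},\,a^{-1}b^{\delta}\}$ for suitable units $\epsilon,\delta$, chosen so that the three "words" $X=\{x_1,x_2,x_1x_2\}$ and $Y=\{y_1,y_2,y_1y_2\}$ are, modulo $G'$, of the shape $a^{ip^r}b^{jp^s}$ with $(r,s)\in\{(0,0),(1,0),\dots\}$ covered by Lemma \ref{orders}; the point of the definition of $m=\max\{m_{0,0}+\lambda',m_{1,0}\}$ is precisely that all these orders are already faithfully seen in $\bar G$.

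The verification of \eqref{intersection1} then splits by the "type" of each element. First I would note that any nontrivial power of a $\bar G$-element whose image in $\bar G/\bar G'$ has nontrivial $a$-component generates a cyclic group meeting $\St_G(1)/\St_G(m+3)$ only in a power of that element with $a$-component trivial; so the heart of the matter is showing that for $x\in X$, $y\in Y$ of the relevant shapes $a^{ip^r}b^{jp^s}$, $a^{i'p^{r'}}b^{j'p^{s'}}$, no conjugate of a nontrivial power of one can equal a nontrivial power of the other. For this I would pass to sections: using Lemma \ref{orders}, the "deepest" nontrivial section of a high power of $a^{ip^r}b^{jp^s}$ is (a conjugate of) $b^{jp^{n-R_0-1}}$ sitting at a controlled level, while the corresponding data for $x$ and $y$ differ because the pairs were chosen with incompatible $(r,s)$-invariants (e.g.\ one has $x_1=a$ of order $p^n$, a genuine $a$-power, while the matching $y$-element is genuinely of "$b$-type"), and because $\langle b,b^{a^{p^{R_q}}},\dots\rangle$ branches — Lemmas \ref{lem:fractal}--\ref{lem:branching} give that $\gamma_3$ of the relevant subgroup surjects onto a full product, which lets me separate conjugacy classes by looking at independent coordinates. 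A clean way to package this is: if $\langle x\rangle\cap\langle y\rangle^g\ne 1$ then some nontrivial power $z$ lies in both, hence $z$ and $z^{g^{-1}}$ have the same order and the same image in $\bar G/\bar G'$; checking the possible $(r,s)$-pairs against each other rules out all cross terms.

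The main obstacle I expect is the bookkeeping in the mixed case where both $x$ and $y$ are of $b$-type with the \emph{same} $a$-exponent pattern modulo $G'$ — there one genuinely needs the branching lemmas to find a level and a coordinate at which a conjugate of a power of $x$ is forced to be trivial while the corresponding power of $y$ is not, and this is where the parameter $\lambda'$ (the level at which the $b$-involving sections of $([a^{p^{R_q}},b,a^{p^{R_q}}]b)^{p^{d-1}}$ become of the form $(b^*)^{a^*}$) is used to guarantee the orders stabilise by level $m+3$. A secondary subtlety is the prime $p=2$, where the multiplicative group mod $p^n$ is not cyclic and "replacing $b$ by a power" has to be done carefully; but Lemmas \ref{lem:branching-new} and \ref{lem:branching} were stated with exactly this generality in mind, so the same argument goes through. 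Once the three types of pairwise checks ($a$-type vs.\ $a$-type, $a$-type vs.\ $b$-type, $b$-type vs.\ $b$-type) are dispatched, \eqref{intersection1} holds for all $x\in X$, $y\in Y$, $g\in\bar G$, and the two pairs form a Beauville structure for $G/\St_G(m+3)$.
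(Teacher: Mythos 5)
Your plan has a genuine gap at its core: the separating invariants you propose --- the order of an element, its image in $\bar G/\bar G'$, and the pair $(r,s)$ --- do not suffice to verify \eqref{intersection1}, and your candidate generating sets contain elements for which all three invariants coincide. Concretely, with $Y=\{ab^{\epsilon},a^{-1}b^{\delta},\dots\}$ the elements $ab\in X$ and $ab^{\epsilon}\in Y$ both have $(r,s)=(0,0)$, and by Lemma~\ref{orders} the order of $a^{i}b^{j}$ (for units $i,j$) depends only on $(r,s)$, so both have order $p^{t(0,0)}$; moreover their socles $\langle (ab)^{p^{t-1}}\rangle$ and $\langle (ab^{\epsilon})^{p^{t-1}}\rangle$ lie deep inside $G'$, so ``same image in $\bar G/\bar G'$'' carries no information about whether they coincide up to conjugacy. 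The proof of Lemma~\ref{orders} shows that the deep sections of $(ab)^{p^{t-1}}$ and $(ab^{\epsilon})^{p^{t-1}}$ are conjugates of $b^{p^{n-R_0-1}}$ and $b^{\epsilon p^{n-R_0-1}}$ respectively, which generate the \emph{same} cyclic group of order $p$; and Theorem~\ref{thm:easy-exceptional} shows that for a whole class of GGS-groups one in fact has $\langle (ab)^{p^{t-1}}\rangle=\langle (a^{i}b^{j}w)^{p^{t-1}}\rangle$ for all units $i,j$ and all $w\in G'$. So separating two mixed elements that differ only by a unit power of $b$ is at best delicate and cannot be dispatched by the invariants you list; this is exactly the case you flag as ``the main obstacle'' but then do not resolve ($\lambda'$ controls when orders stabilise, not whether socles are conjugate).

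The paper's proof circumvents this by never pitting $a^{i}b^{j}$ against $a^{i'}b^{j'}$ with both pairs units and distinct: it takes $X=\{a^{p-1},ab,a^{p}b\}$ and $Y=\{ac,b,acb\}$, where $c=\psi_{\mu+1}^{-1}\bigl((1,\dots,1,[a^{p^{R}},b,a^{p^{R}}])\bigr)\in\gamma_3(G)$ is produced by Lemmas~\ref{lem:fractal}, \ref{lem:branching-new} and \ref{lem:branching}. The only hard comparison is then $ab$ versus $acb$, which agree modulo $\gamma_3(G)$, and it is settled by an explicit computation of $\psi([a^{p^{R}},b,a^{p^{R}}]b)$: one counts, at a suitable level, how many sections of the relevant socle elements are non-rooted automorphisms, and whether those sections lie in $\gamma_3(G)$ or in $\st_G(1)\setminus\gamma_3(G)$. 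This use of a $\gamma_3$-perturbation and of section-counting is the essential idea missing from your proposal; without it (or a substitute of comparable strength) the $b$-type versus $b$-type and mixed versus mixed comparisons are not established.
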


\begin{proof} \underline{Case 1:} Suppose that $e_{kp^{n-1}}\ne 0$ for some $k\in\{1,\ldots,p-1\}$. Observe that by assumption the prime $p$ must be odd. Let $1\le q\le n-1$ be  such that $R:=R_q=R_{q+1}<n$. Suppose first that $e_{\ell p^{n-1}}= 0$ for some $\ell\in\{1,\ldots,p-1\}$.  Then  writing $\mu=m_{0,0}$ and using Lemmata~\ref{lem:branching-new} and \ref{lem:fractal} repeatedly we obtain 
\[
c=\psi_{\mu+1}^{-1}\big((1,\overset{p^{(\mu+1)n}-1}{\ldots},1,[a^{p^{R}},b,a^{p^{R}}])\big)\in \gamma_3(G).
\]

By abuse of notation, we still write $a$ and $b$ for their images in $G_{m+3}$. We set 
\[
X=\{a^{p-1},ab,a^pb\} \quad\text{with}\quad Y=\{  ac,b, acb\}.
\]

Assume first that $x=a^{p-1}$, and consider $y=ac$. We write $p^\tau$ for the order of~$ac$ in~$G_{m+3}$, which is strictly greater than~$p^n$.  In order to prove our claim for this choice of $x$ and $y$, it suffices to show that
\[
\langle a^{p^{n-1}}\rangle \ne \langle (ac)^{p^{\tau-1}}\rangle^g,
\]
for all $g\in G_{m+3}$.
Note that the element $(ac)^{p^{\tau-1}}$ is in $\st_{G_{m+3}}(1)$ and  so is any conjugate of $(ac)^{p^{\tau-1}}$. However $a^{p^{n-1}}\not \in \st_{G_{m+3}}(1)$, hence (\ref{intersection1}) holds here.
 Similarly for $y\in\{b,acb\}$.

 Suppose next that $x=ab$, and consider $y=b$. By Lemma~\ref{orders}, the element~$ab$ has order $p^{t(0,0)}$.
 It suffices to consider the intersection of $\langle (ab)^{p^{t(0,0)-1}}\rangle$ and $\langle b^{p^{n-R_0-1}}\rangle^g$. 
 We observe that $\psi((b^{p^{n-R_0-1}})^g)$ has exactly one component consisting of  purely a non-rooted automorphism in $\st_{G}(1)$, whereas $\psi((ab)^{p^{t(0,0)-1}})$ has $p^n$ such components.
 Therefore (\ref{intersection1}) holds.  For $y=ac$, we observe that $\psi_{\mu+1}((ac)^{p^{\tau-1}})$ has $p^n$ components consisting of non-rooted  automorphisms in~$\gamma_3(G)$, whereas for $\psi_{\mu+1}((ab)^{p^{t(0,0)-1}})$ all components consisting of non-rooted  automorphisms  are in $\st_{G}(1)\backslash \gamma_3(G)$.
 Hence the result holds for this pair.
 
Let $x=a^pb$. Since $(a^pb)^{p^{n-1}}$ has $p^{n-1}$ components consisting of non-rooted automorphisms at the first level, whereas both $(ac)^{p^n}$ and $(acb)^{p^n}$ have $p^n$ such components and $b^{p^{n-R_0-1}}$ has one such component, we have that \eqref{intersection1} holds here.

Hence, it remains to establish~(\ref{intersection1}) for $x=ab$ with $y=acb$. 
Recall from the proof of Lemma~\ref{orders} that 
\begin{align*}
&\psi_{\mu+1}^{p^{\mu n}}\bigg((ab)^{p^{t(0,0)-n+R_0}}\bigg)=\bigg((a^{p^{R_0}})^*,\overset{p^{t_{\mu-1}}-1}\ldots,(a^{p^{R_0}})^*,b^{a^*},\\
&\qquad\qquad\qquad\qquad\qquad\qquad\qquad\qquad\qquad\qquad \,\ldots\,,\,\ldots\,,(a^{p^{R_0}})^*,\overset{p^{t_{\mu-1}}-1}\ldots,(a^{p^{R_0}})^* ,b\bigg).
\end{align*}
However, 
\begin{align*}
&\psi_{\mu+1}^{p^{\mu n}}\bigg((acb)^{p^{t(0,0)-n+R_0}}\bigg)=\bigg((a^{p^{R_0}})^*,\overset{p^{t_{\mu-1}}-1}\ldots,(a^{p^{R_0}})^*,([a^{p^{R}},b,a^{p^{R}}]b)^{a^*},\\
&\qquad\qquad\qquad\qquad\qquad\qquad \,\ldots\,,\,\ldots\,,(a^{p^{R_0}})^*,\overset{p^{t_{\mu-1}}-1}\ldots,(a^{p^{R_0}})^* ,[a^{p^{R}},b,a^{p^{R}}]b\bigg).
\end{align*}
As $\psi([a^{p^{R}},b,a^{p^{R}}]b)$ has more than one section with a non-rooted automorphism, whereas $\psi(b)$ has only one, the result follows. Indeed,
\begin{align*}
\psi([a^{p^{R}},b,a^{p^{R}}]b)&=(a^*,\overset{p^{R}-1}{\ldots},a^*, (ba^{-e_{p^n-p^{R}}}b)^{a^{e_{p^{R}}}}
,a^*,\overset{p^{R}-1}{\ldots},a^*, (a^{e_{p^{R}}}b^{-1}a^{e_{p^{R}}})^{a^{e_{2
p^{R}}}}
,\\
&\qquad 
\qquad\qquad 
\qquad\qquad 
\qquad\qquad 
\qquad a^*,\ldots, a^*,
b^{-1}a^{2e_{p^n-p^{R}}-e_{p^n-2p^{R}}}b
),
\end{align*}
and so if $e_{p^{R}}
=e_{p^n-p^{R}}=0$ then
\[
\psi([a^{p^{R}},b,a^{p^{R}}]b)=(a^*,\overset{p^{R}-1}{\ldots},a^*,  b^2,a^*,\overset{p^{R}-1}{\ldots},a^*, (b^{-1})^{^{e_{2
p^{R}}}},a^*,\ldots, a^*,(a^{-e_{p^n-2p^R}})^b)
\]
and the result is clear. If either $e_{p^{R}}$ or $e_{p^n-p^{R}}$ is non-zero, we see from the proof of Lemma~\ref{orders} that, writing $p^\chi$ for the order of $acb$ in~$G$, the minimal level~$\lambda$ of the tree, for which all $\lambda$-level sections of $(acb)^{p^{\chi-1}}$ that involve $b$ are of the form $(b^*)^{a^*}$ for some unspecified exponents~$*$, is strictly greater than $\mu+1$. In particular, 
all these cases imply that the number of sections of $(acb)^{p^{\chi-1}}$ at level~$\lambda$ consisting of  purely non-rooted automorphisms is  strictly greater than $p^{n-t_{\mu-1}}$, which is the corresponding number for $(ab)^{p^{t(0,0)-1}}$ at level~$\lambda$. 

Suppose now that $e_{\ell p^{n-1}}\ne 0$ for all $\ell\in\{1,\ldots,p-1\}$. The result follows similarly using Lemma~\ref{lem:branching}.

\underline{Case 2:} Suppose that $e_{kp^{n-1}}= 0$ for all $k\in\{1,\ldots,p-1\}$. Since $G$ is infinite,
for some $1\le q\le n-1$ we have $R:=R_q=R_{q+1}<n-1$. Then 
we proceed as in the previous case, however noting that for $p=2$, we replace $c$ with $$\psi_{\mu+1}^{-1}\big((1,\overset{2^{(\mu+1)n}-1}{\ldots},1,[a^{2^{R}},b,a^{2^{R+1}}])\big),
$$
since then,   recalling that $R <n-1$,
\begin{align*}
&\psi([a^{2^{R}},b,a^{2^{R+1}}]b)=\\
&\quad \big(a^*,\overset{2^{R}-1}{\ldots},a^*, (ba^{-e_{ 2^n-2^{R+1}}+e_{ 2^n-2^{R}}})^{a^{e_{2^{R}}}}, a^*,\overset{2^{R}-1}{\ldots}, a^*, (a^{e_{2^R}-e_{2^n-2^{R}}}b)^{a^{e_{2^{R+1}}}},\\
&\,\,\,\quad a^*,\overset{2^{R}-1}{\ldots},a^*, (a^{e_{2^{R+1}}}b^{-1}a^{e_{2^{R}}})^{a^{e_{3\cdot 2^{R}}}}, a^*,\ldots,\ldots, a^*,(a^{e_{2^n-2^R}-e_{2^n-3\cdot 2^R}+e_{2^n-2^{R+1}}})^b\big).\qedhere
\end{align*}
\end{proof}

\begin{proof}[Proof of Theorem~\ref{thm:main}] 
Let $G$ be an infinite periodic GGS-group acting on the $p^n$-adic tree with $p$ a prime and $n\ge 2$. Let $t_0$ be defined according to $r=s=0$,
and set $m_G=m+3$.
By Theorem~\ref{thm:highest-nonempty}, the quotient $G/\textup{St}_G(m_G)$ is a Beauville group with triples $X=\{a^{p-1},ab,a^pb\}$ and $Y=\{  ac,b, acb\}$,  for $c$ defined as in the proof of Theorem~\ref{thm:highest-nonempty}.

 As  the orders of each of the elements in~$X$ 
 are the same in $G/\St_G(k)$ and $G/\St_G(m_G)$ for all $k\ge m_G$, it follows from \cite[Lem.~4.2]{FJ} that $G/\St_G(k)$ is a Beauville group for every $k\ge m_G$.
\end{proof}

\begin{remark}\label{rmk:mixed}
We recall that a \emph{mixed Beauville structure} for a finite group~$H$ is a triple $(H,M,X)$ with $M\le H$ a subgroup of index~2 and $X=\{x,y\}$ satisfying the following properties:
\begin{enumerate}
\item[$\bullet$] $M=\langle x,y\rangle$,
\item[$\bullet$] there exists $h_0\in H\backslash M$ such that $$h_0^{-1}\big(\Sigma(x,y)\big)h_0\cap \Sigma(x,y)=\{1\},$$
\item[$\bullet$] for all $h\in H\backslash M$, we have $h^2\notin \Sigma(x,y)$.
\end{enumerate}
Since $M$ is of index 2, for finite $p$-groups, only 2-groups can admit mixed Beauville structures.  Furthermore, if $H$ admits a mixed Beauville structure $(H,M,X)$, then $M$ must be 2-generated. 
 However, none of the finite 2-groups that we consider are mixed Beauville groups. This is because for $G$  an infinite periodic GGS-group acting on the $2^n$-adic tree for $n\ge 2$, no maximal subgroup of $G/\st_G(k)$ is 2-generated.  Indeed, consider $G_k$ for a fixed $k\ge m_G$. Since $\Phi(G_k)=G_k^2$, there are three maximal subgroups of~$G_k$, which are
    \[
    M_1:=\langle a\rangle G_k^2,\quad  M_2:=\langle b\rangle G_k^2,\quad  M_3:=\langle ab\rangle G_k^2.
    \]
   The Schreier index formula shows that $M_1$, $M_2$ and $M_3$ can be generated by at most 3 elements. Using the Reidemeister-Schreier method,  we obtain 
    \[
    M_1=\langle a,(ab)^2,b^2\rangle, \quad  M_2=\langle a^2, b,aba\rangle=\langle a^2, b,(ab)^2\rangle, \quad  M_3=\langle a^2, ba,ab\rangle=\langle a^2, b^2,ab\rangle. \]
     Clearly,
    \[
    M_3/G_k'=\langle a^2, b^2,ab\rangle/G_k'\cong \langle a^2\rangle \times \langle b^2\rangle \times \langle ab\rangle , \]
    and thus $d(M_3)=3$.
 Suppose for a contradiction that $d(M_1)=2$. 
    Since 
    \[
    M_1/G_k'=\langle a,b^2\rangle 
    G_k'/G_k'\cong \langle a\rangle \times \langle b^2\rangle, 
    \]
     and  $[a,b]\notin \langle a,b^2\rangle$, it follows that $d(M_1)=3$. A similar argument holds for $M_2$.
\end{remark}

\begin{remark}\label{rem:non-periodic}
In the above theorems, we only require that $S[i]\equiv 0 \pmod {p^{i+1}}$ for $i\in I$, for some subset $I\subseteq \{0,1,\ldots,n-1\}$. Therefore there are non-periodic GGS-groups acting on the $p^n$-adic tree, for $n\ge 2$, with quotients being Beauville groups. For instance, if 
\begin{enumerate}
    \item [$\bullet$] $t_0 >2$ for the pairs $r=s=0$  and $r=1, s=0$, and
    \item [$\bullet$]  $R>2$, and 
     \item [$\bullet$] $S[2]\not\equiv 0 \pmod {p^{3}}$, but $S[i]\equiv 0 \pmod {p^{i+1}}$ for all other $i$,
\end{enumerate}  
 the proofs above still hold.
\end{remark}

\begin{remark}\label{rmk:finite}
In some cases, the proofs above can be extended to finite GGS-groups $G$. Since $G$ is finite, there exists 
$d\in\{0,1,\ldots,n-1\}$
such that $R_{d+1}=n$. Writing $\mu=m_{0,0}$, we observe that $d\ge \mu-1$. 
If $d> \mu$, then by repeated application of Lemmata~\ref{lem:branching-new} and \ref{lem:fractal},  
we have an element  $$c:=\psi_{\mu+1}^{-1}\big((1,\overset{p^{(\mu+1)n}-1}{\ldots},1,[a^{p^{R_{\mu}}},b,a^{p^{R_{\mu}}}])\big)\in \gamma_3(G).$$
Setting $X=\{a^{p-1},ab,a^pb\}$ and $Y=\{  ac,b, acb\}$, we are done as in Case 1 of the  proof of Theorem~\ref{thm:highest-nonempty}. 

If $d= \mu$, 
then
using the same $c$ and the sets $X$ and $Y$ as before, 
since $e_{\ell p^{R_\mu}}=0$ for any~$\ell$, we observe that
\[
\psi([a^{p^{R_\mu}},b,a^{p^{R_\mu}}]b)=(a^*,\overset{p^{R_\mu}-1}{\ldots},a^*, b^2
,a^*,\overset{p^{R_\mu}-1}{\ldots},a^*, b^{-1}
,a^*,\ldots, a^*,1
),
\]
and hence if $p$ is odd, the same argument follows.  For $p=2$, the same argument can clearly be used upon replacing $c$ with $$\psi_{\mu+1}^{-1}\big((1,\overset{2^{(\mu+1)n}-1}{\ldots},1,[a^{2^{R_\mu}},b,a^{2^{R_\mu+1}}])\big),
$$
provided $R_\mu <n-1$, since then 
\[
\psi([a^{2^{R_\mu}},b,a^{2^{R_\mu+1}}]b)=(a^*,\overset{2^{R_\mu}-1}{\ldots},a^*, b,a^*,\overset{2^{R_\mu}-1}{\ldots},a^*, b,a^*,\overset{2^{R_\mu}-1}{\ldots},a^*, b^{-1},a^*,\ldots, a^*,1).
\]

However, for  each of these finite groups $G$, one would only obtain finitely many (Beauville) quotients $G/\text{St}_G(k)$, for $k\ge m_G$.

In the next section, we identify a class of finite GGS-groups where no quotient by a level stabiliser is Beauville.
\end{remark}

\section{Finite GGS-groups acting on the $p^n$-adic tree
}\label{sec:exceptional}

We define the  subclass $\mathcal{E}$ of finite GGS-groups  to consist of:
\begin{enumerate}
\item[(i)] the GGS-groups acting on the $p^n$-adic tree, for $p$ any prime and $n\ge 2$, with $t_0=R_0$, $\ldots$ , $t_{m_{0,0}-1}=R_{m_{0,0}-1}$ and $R_{m_{0,0}}=n$;
and 
\item[(ii)] the GGS-groups acting on the $2^n$-adic tree with 
$R_0=n-1$, $S[0]\equiv 0\pmod {p^n}$, and 
 $e_{2^{n-1}}= 0$.
\end{enumerate}

We show here that quotients of the GGS-groups $G\in\mathcal{E}$ by  level stabilisers do not yield Beauville groups. We first consider the subfamily (i) of groups in~$\mathcal{E}$.

\begin{theorem}\label{thm:easy-exceptional}
Let $G\in\mathcal{E}$ be a  GGS-group acting on the $p^n$-adic  tree  for $n\ge 2$, and assume that $t_0=R_0$, $\ldots$ , $t_{m_{0,0}-1}=R_{m_{0,0}-1}$ and $R_{m_{0,0}}=n$,
where $t_0, t_1, \ldots, t_{m_{0,0}-1}$ are defined according to $r=s=0$.
Then  the quotient $G/\textup{St}_G(k)$ is not a Beauville group for all $k\in\mathbb{N}$.
\end{theorem}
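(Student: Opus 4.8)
The plan is to prove the slightly stronger assertion that $G_k:=G/\St_G(k)$ contains a fixed nontrivial \emph{central} element lying in $\Sigma(x,y)$ for \emph{every} generating pair $\{x,y\}$ of $G_k$; by the reformulation recalled around~(\ref{intersection1}) this forbids a Beauville structure, for any $k\in\mathbb{N}$. First I record the structural input. Since $R_{m_{0,0}}=n$, Vovkivsky's criterion shows that $G$ is finite, and because $G/G'\cong C_{p^n}\times C_{p^{n-R_0}}$ we get $G_k/\Phi(G_k)\cong C_p\times C_p$ for $k\geq 2$ (while $G_1\cong C_{p^n}$ is cyclic and trivially not a Beauville group); in particular $G_k$ has exactly $p+1$ maximal subgroups, the preimages of the $p+1$ subgroups of order~$p$ of $C_p\times C_p$.

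Call an element $g\in G_k$ \emph{mixed} if its image in $G_k/\Phi(G_k)\cong C_p\times C_p$ has, with respect to the basis $\{\bar a,\bar b\}$, both coordinates nonzero. An elementary computation in $C_p\times C_p$ shows that whenever $\{x,y\}$ generates $G_k$, the images $\bar x,\bar y$ form a basis, and at least one of $\bar x$, $\bar y$, $\overline{xy}$ has both coordinates nonzero; that is, \emph{every generating triple $\{x,y,xy\}$ of $G_k$ contains a mixed element.}

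The crux is then a rigidity statement: there is a fixed nontrivial $z_k\in Z(G_k)$ with $z_k\in\langle g\rangle$ for every mixed $g\in G_k$. Granting this, for an arbitrary generating pair $\{x,y\}$ one of $x,y,xy$ is mixed, so $\langle z_k\rangle\subseteq\Sigma(x,y)$, whence $\Sigma(x_1,y_1)\cap\Sigma(x_2,y_2)\supseteq\langle z_k\rangle\neq 1$ for any two generating pairs, and we are done. To prove the rigidity statement, take $z_k:=(ab)^{p^{e_k-1}}$, where $p^{e_k}$ is the order of $ab$ in $G_k$ (for $k$ large this is the element $(ab)^{p^{t(0,0)-1}}$ with $t(0,0)$ given by Lemma~\ref{orders} at $r=s=0$; recall that in case~(i) the integer $t_0$ is independent of~$s$, and $t(0,0)>n$ under the hypotheses). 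Using the explicit description of $(ab)^{p^{t(0,0)-1}}$ extracted in the proof of Lemma~\ref{orders} one checks that $\langle z_k\rangle$ is central, and that $z_k\neq 1$ since $ab$ acts nontrivially on the first level. A mixed element is of the shape $g=a^ib^jc$ with $i,j\not\equiv 0\pmod p$ and $c\in G'$, so what must be shown is that this perturbation by $c$ changes neither the order of $g$ nor the value of $g^{p^{e_k-1}}$. This is precisely where the hypothesis $G\in\mathcal{E}$ is used: the conditions $t_\ell=R_\ell$ for $\ell<m_{0,0}$ and $R_{m_{0,0}}=n$ say that the branching structure of $G$ collapses at level~$m_{0,0}$, so that $G'$ contains nothing playing the role of the perturbing element~$c$ of the proof of Theorem~\ref{thm:highest-nonempty}; by contrast, for infinite $G$ that element is exactly what made $ab$ and $acb$ generate cyclic subgroups with trivial intersection. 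Concretely, I would argue by induction on the level, repeating the computation of Lemma~\ref{orders} with $a^ib^j$ replaced by $a^ib^jc$ and showing, using $R_{m_{0,0}}=n$, that all sections involving $b$ at the relevant levels agree up to conjugacy with those for $ab$ while the extra commutator contributions die; this pins the order at $p^{e_k}$ and the top power at~$z_k$.

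The only nontrivial part of the argument is this rigidity statement, and within it the verification that no element of $G'$ can raise the order of a mixed element or alter its top power; this is the finite counterpart of the branching Lemmata~\ref{lem:branching-new} and~\ref{lem:branching} used in the infinite case, and it is exactly there that the defining conditions of the class~$\mathcal{E}$ are essential. Everything else — finiteness of $G$, the shape of $G_k/\Phi(G_k)$, the linear-algebra observation that every generating triple meets the mixed type, and the final intersection conclusion — is routine once the rigidity is in hand.
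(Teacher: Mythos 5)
Your proposal follows essentially the same route as the paper: reduce to the observation that every generating triple $\{x,y,xy\}$ contains a ``mixed'' element $a^ib^jw$ with $i,j\not\equiv 0\pmod p$ and $w\in G_k'$, and then show that the order-$p$ subgroup generated by the top $p$-power of such an element is the same for all choices of $i$, $j$ and $w$ (the paper proves equality of the subgroups $\langle(a^ib^jw)^{p^{t-1}}\rangle$ rather than of the elements themselves, which is all that is needed; your additional centrality check is superfluous, though it does follow since a normal subgroup of order $p$ is central). The one caveat is that the single substantive step --- your ``rigidity statement'' --- is only outlined, not proved: you correctly name the method (rerun the computation of Lemma~\ref{orders} with $a^ib^j$ replaced by $a^ib^jw$ and use $R_{m_{0,0}}=n$ to kill the perturbation), but the execution of that computation \emph{is} the body of the paper's proof. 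Concretely, one must verify that every component of $\psi\bigl((a^ib^jw)^{p^n}\bigr)$ has the form $a^{jS[0]}b^jv_0$ with $v_0\in\langle a^{p^{R_0}},b\rangle'$, recurse this level by level using $t_\ell=R_\ell$, and observe that at level $m_{0,0}$ the subgroup $\langle a^{p^{R_{m_{0,0}-1}}},b\rangle$ is abelian (because $R_{m_{0,0}}=n$ forces the relevant $e_\ell$ to vanish), so the accumulated perturbation dies; the small cases $k=2$ and $3\le k\le m_{0,0}+2$ need the same analysis truncated at the appropriate level. Your plan is the right one and would succeed, but as written the proof is a correct skeleton with its load-bearing computation deferred.
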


\begin{proof}
The result is clear for $k=1$ since $G/\st_G(1)$ is cyclic, so we assume $k\ge 2$.  Let $\{x_1,x_2\}$ and $\{y_1,y_2\}$ be two systems of generators for $G$. At least one of $x_1,x_2,x_1x_2$, call it $z_1$, must be in the coset $a^ib^jG'$ for  $i,j\not \equiv 0 \pmod p$. Likewise for $y_1,y_2,y_1y_2$, and call it $z_2$. 
By Lemma~\ref{orders} and its proof, the order of~$ab$ is~$p^{t(0,0)}$ if $k\ge m_{0,0}+3$, and $p^{2n-R_0}$ if $k=2$. For $3\le k\le m_{0,0}+1$, the order of $ab$ is at least
\[
p^np^{n-R_0}p^{n-R_1}\cdots p^{n-R_{k-2}},
\]
and for $k=m_{0,0}+2$, 
the order of $ab$ is at least
\[
p^np^{n-R_0}p^{n-R_1}\cdots p^{n-R_{m_{0,0}-1}}.
\]

Suppose first that $k=2$. For  $i,j\not \equiv 0 \pmod p$ and $w\in G_2'$, as
\[
\phi_2((a^ib^jw)^{p^n})=(a^{jS[0]},\ldots, a^{jS[0]}) 
\]
it follows that $\langle z_1^{p^{2n-R_0-1}}\rangle =\langle z_2^{p^{2n-R_0-1}}\rangle$, which gives the result.

Now let $k\ge m_{0,0}+3$. Here, writing $t=t(0,0)$, we claim that 
\[
\langle (ab)^{p^{t-1}}\rangle = \langle (a^ib^jw)^{p^{t-1}}\rangle\cong C_p
\]
for all $i,j\not \equiv 0 \pmod p$ and $w\in G'$.

To prove the claim, we first note, writing $\psi(w)=(w_1,\ldots,w_{p^n})$, that the product of all the components $w_1,\ldots, w_{p^{n}}$, irrespective of the order, has zero total exponent in~$a$, and in~$b$, and if written as
\[
(b^{\beta_1})^{a^{\alpha_1}}\cdots(b^{\beta_d})^{a^{\alpha_d}}
\]
for some $d\ge 2$ with $\beta_1,\ldots,\beta_d\in \mathbb{Z}/p^{n-R_0}\mathbb{Z}$ and $\alpha_1,\ldots,\alpha_d\in \mathbb{Z}/p^n\mathbb{Z}$, 
then  we have $\alpha_i\equiv 0 \pmod {p^{R_0}}$ for all $i\in \{1,\ldots,d\}$.

Hence, each component of $\psi\big((a^ib^jw)^{p^n}\big)$ is of the form
\[
a^{jS[0]}b^j v_0,
\]
for some $v_0\in\langle a^{p^{R_0}},b\rangle'$. 
We note that, for  $\omega\in A$ with $\omega\not\equiv  0 \pmod {p^{R_0}}$,
\[
\varphi_\omega\bigg((a^{jS[0]}b^j v_0)^{p^{n-R_0}}\bigg)= \varphi_\omega\bigg((a^{S[0]}b)^{jp^{n-R_0}}\bigg)=
a^{j\gamma_\omega}
\]
where
\[
\gamma_\omega=\sum_{\ell=0}^{p^{n-R_0}-1} e_{\omega+\ell p^{R_0}},
\]
 and for  $\omega\in A$ with $\omega\equiv  0 \pmod {p^{R_0}}$,
\[
\varphi_\omega\bigg(\big((a^{jS[0]}b^j v_0)^{p^{n-R_0}}\big)^{a^{f p^{R_0}}}\bigg)
= a^{jS[R_0]}b^jv_1
\]
for $v_1\in\langle a^{p^{R_1}},b\rangle'$ and $f\in\{0,1,\ldots,p^{n-R_0}-1\}$.

Thus, continuing in this manner, we see that it suffices to compare
\[
(a^{jS[R_{\mu-2}]}b^jv_{\mu-1} )^{p^{n-R_{\mu-1}}}\qquad\text{with}\qquad (a^{S[R_{\mu-2}]}b)^{p^{n-R_{\mu-1}}},
\]
where $\mu=m_{0,0}$ and $v_{\mu-1} \in\langle a^{p^{R_{\mu-1}}},b\rangle'$. Recalling also that $R_{\mu}=n$, equivalently the components of $\psi(b)$ in positions $p^{R_{\mu-1}},2p^{R_{\mu-1}},\ldots, p^n-p^{R_{\mu-1}}$ are trivial, we have
\[
\psi\big((a^{jS[R_{\mu-2}]}b^jv_{\mu-1} )^{p^{n-R_{\mu-1}}}\big)=(a^{j\delta_1},\ldots, a^{j\delta_{p^{R_{\mu-1}}-1}},b^j,\,\,\ldots\,\,, a^{j\delta_1},\ldots, a^{j\delta_{p^{R_{\mu-1}}-1}},b^j)
\]
where, for $q\in\{1,\ldots,p^{R_{\mu-1}}-1\}$,
\[
\delta_q=\sum_{\ell=0}^{p^{n-R_{\mu-1}}-1} e_{q+\ell p^{R_{\mu-1}}}.
\]

As 
\[
\psi\big((a^{S[R_{\mu-2}]}b)^{p^{n-R_{\mu-1}}}\big)=(a^{\delta_1},\ldots, a^{\delta_{p^{R_{\mu-1}}-1}},b,\,\,\ldots\,\,, a^{\delta_1},\ldots, a^{\delta_{p^{R_{\mu-1}}-1}},b),
\]
and, from the above computations, we deduce that all other non-trivial labels in the portraits of $(ab)^{jp^{t-1}}$ and $(a^ib^jw)^{p^{t-1}}$ are equal powers of~$a$. Hence,
the claim follows.

Thus, we have $\langle z_1^{p^{t-1}}\rangle = \langle z_2^{p^{t-1}}\rangle$. 

For $3\le k\le \mu+2$, writing $p^{\hat{t}}$ for the order of~$ab$ modulo $\st_G(k)$, the analogous result $\langle z_1^{p^{\hat t-1}}\rangle = \langle z_2^{p^{\hat t-1}}\rangle$ follows similarly from the component-wise description of 
\[
(a^{jS[0]}b^jv_{0} )^{p^{n-R_{0}}}, \,(a^{jS[R_{0}]}b^jv_{1} )^{p^{n-R_{1}}}, \, \ldots\,,\,(a^{jS[R_{k-4}]}b^jv_{k-3} )^{p^{n-R_{k-3}}},
\]
where we recall that $R_{-1}=0$.
\end{proof}

\begin{theorem}\label{thm:R-0_big}
Let $G$ be a periodic GGS-group acting on the $2^n$-adic  tree  for $n\ge 2$. Suppose that   $R_0=n-1$ and $S[0]\equiv 0\pmod {p^n}$.
Then,  the quotient $G/\textup{St}_G(k)$ is not a Beauville group for all $k\in\mathbb{N}$.
\end{theorem}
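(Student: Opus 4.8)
The plan is to run the strategy of Theorem~\ref{thm:easy-exceptional}: for any two generating pairs of $G_k:=G/\St_G(k)$ I would pin down a single coset of the Frattini subgroup that meets both of the associated triples, and then show that along that coset the top power always generates one and the same cyclic group of order~$2$ up to conjugacy; this forces $\Sigma(x_1,x_2)\cap\Sigma(y_1,y_2)\neq\{1\}$, so condition~(\ref{intersection1}) cannot hold.

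First, $G_1\cong C_{2^n}$ is cyclic, so we may assume $k\geq 2$; and if $d(G_k)\neq 2$ then $G_k$ is not a Beauville group, so we may assume $d(G_k)=2$, whence $G_k/\Phi(G_k)\cong C_2\times C_2$. Given generating pairs $\{x_1,x_2\}$ and $\{y_1,y_2\}$, the images of $x_1,x_2,x_1x_2$ in $G_k/\Phi(G_k)$ are its three non-trivial elements, one of which is the image of $ab$ (since $b$ has order~$2$, every element of $G_k$ is congruent to some $a^ib^j$ modulo $G_k'$); hence exactly one of $x_1,x_2,x_1x_2$, say $z_1$, lies in the coset
\[
(ab)\Phi(G_k)=\{a^ibc\mid i\text{ odd},\ c\in G_k'\},
\]
and likewise $\{y_1,y_2,y_1y_2\}$ contains exactly one such element $z_2$. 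The key claim is that every element of $(ab)\Phi(G_k)$ has the same order $2^{t}$ in $G_k$ as $ab$, and that its $2^{t-1}$-th power is conjugate in $G_k$ to $(ab)^{2^{t-1}}$. Granting this, write $z_1^{2^{t-1}}=\big(z_2^{2^{t-1}}\big)^{g}$ for some $g\in G_k$; since $z_i^{2^{t-1}}$ is a non-trivial element of $\langle z_i\rangle$ and $\langle z_2^{2^{t-1}}\rangle^{g}\leq\langle z_2\rangle^{g}$, we get $1\neq z_1^{2^{t-1}}\in\langle z_1\rangle\cap\langle z_2\rangle^{g}$, so $\Sigma(x_1,x_2)\cap\Sigma(y_1,y_2)\neq\{1\}$. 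As the generating pairs were arbitrary, $G_k$ is not a Beauville group.

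For the order part of the key claim I would invoke Lemma~\ref{orders} with $r=s=0$: since $S[0]\equiv 0\pmod{2^n}$ we are in its case~(ii) with $m_{0,0}=0$, so $ab$ has order $2^{2n-R_0}=2^{n+1}$ in $G$ and in $G_3$ (hence in $G_k$ for $k\geq 3$), while a direct check gives that $ab$ has order $2^{n}$ in $G_2$; the $G_k'$-factor $c$ only contributes, in the relevant sections, powers of $a^{2^{R_0}}=a^{2^{n-1}}$ and elements of $\langle a^{2^{n-1}},b\rangle'$, and so does not change the order. For the conjugacy part when $k=2$: here $G_2=\St_{G_2}(1)\rtimes\langle a\rangle$, with $A:=\St_{G_2}(1)$ elementary abelian (because $R_0=n-1$) and $a$ of order $2^n$ acting on $A$ by the cyclic permutation of coordinates; writing an element of $(ab)\Phi(G_2)$ as $a^iv$ with $i$ odd and $v\in A$, its $2^{n-1}$-th power equals $a^{2^{n-1}}w_i$ with $w_i=\sum_{m=0}^{2^{n-1}-1}v^{a^{im}}$, and since $i$ is a unit modulo $2^{n-1}$, comparing the group-ring operators $\sum_m x^{im}$ and $\sum_m x^m$ in $\mathbb{F}_2[x]/(x+1)^{2^n}$ yields $w_i-w_1\in(1-a^{2^{n-1}})A$; hence $a^{2^{n-1}}w_i$ is conjugate, by an element of $A$, to $a^{2^{n-1}}w_1=(ab)^{2^{n-1}}$.

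Finally, for $k\geq 3$: since $R_0=n-1$, periodicity forces $e_{2^{n-1}}=0$, so by Lemma~\ref{lem:stab-trivial} the group $G$ is finite with $\St_G(4)=\{1\}$; thus $G_k=G$ for $k\geq 4$, and only $G_3$ and $G_4=G$ remain. For these I would apply $\psi$ to $(a^ibc)^{2^n}\in\St_G(1)$: because $R_0=n-1$, every first-level section lands in $N_{n-1}=\langle a^{2^{n-1}},b\rangle$, so one reduces, one level down the tree, to an analysis in the same spirit as the $k=2$ case, now using Lemmata~\ref{lem:nilpotent}, \ref{lem:cyclic} and~\ref{lem:all-conjugate} to control the deeper commutators and to see that $(a^ibc)^{2^n}$ and $(ab)^{2^n}$ differ only by an element of the normal subgroup $\{[(ab)^{2^n},g]\mid g\in G\}$, so that they lie in a single $G$-conjugacy class. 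This last step, $k\geq 3$, is the main obstacle: passing one further level down and pinning the resulting sections up to conjugacy; everything else is bookkeeping of the kind already carried out for Lemma~\ref{orders} and Theorem~\ref{thm:easy-exceptional}.
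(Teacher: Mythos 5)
Your overall architecture coincides with the paper's: reduce to $k\le 4$ via Lemma~\ref{lem:stab-trivial}, locate in each triple an element of one fixed coset of $\Phi(G_k)$, and show that appropriate powers of any two such elements generate conjugate subgroups of order~$2$. Your $k=2$ computation is a legitimate variant: the paper works in the coset $a\Phi(G_2)$ and shows $(a^iw)^{2^{n-1}}$ is conjugate to $a^{2^{n-1}}$ via the commutator identities of Leedham-Green--McKay, whereas you stay in $(ab)\Phi(G_2)$ and compute in the $\mathbb{F}_2[x]/(x+1)^{2^n}$-module $A=\St_{G_2}(1)$. That works, but note a slip: $a^{2^{n-1}}w_1=(av)^{2^{n-1}}$ equals $(ab)^{2^{n-1}}$ only when $v=b$, so besides varying $i$ you must also absorb the variation of $v$ inside $bG_2'$. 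This is repairable, since $G_2'=[A,\langle a\rangle]=(1+x)A$ and $\sum_{m=0}^{2^{n-1}-1}x^{im}=(x^i+1)^{2^{n-1}-1}$ is divisible by $(1+x)^{2^{n-1}-1}$, so the extra contribution again lands in $(1+x)^{2^{n-1}}A=(1-a^{2^{n-1}})A$; but it has to be said.

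The genuine gap is the case $k\ge 3$, which is the technical core of the theorem and which you yourself flag as ``the main obstacle''. The target you name --- that $(a^ibw)^{2^n}$ and $(ab)^{2^n}$ differ by an element of $\{[(ab)^{2^n},g]\mid g\in G\}$ --- is exactly what the paper proves, and Lemmata~\ref{lem:nilpotent}, \ref{lem:cyclic} and~\ref{lem:all-conjugate} are indeed the right tools; but the derivation is absent, and it is not bookkeeping of the kind already carried out elsewhere. The paper does not descend the tree here: it stays in $G$, expands $(a^{1+2\delta}bw)^{2^n}$ by the collection formula, uses Lemma~\ref{lem:nilpotent} to kill the squares of long commutators, uses Lemma~\ref{lem:cyclic} to get $|\gamma_i(G):\gamma_{i+1}(G)|\le 2$ for $i>2^n$, and chooses $\ell$ with $[(ab)^{2^n},a]\in\gamma_\ell(G)\setminus\gamma_{\ell+1}(G)$ so that the discrepancy lies in $\gamma_\ell(G)$, which by Lemma~\ref{lem:all-conjugate} consists precisely of the commutators $[(ab)^{2^n},g]$. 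Your proposed alternative of applying $\psi$ and arguing ``one level down'' as in the $k=2$ case faces a real obstruction: the $2^n$ first-level sections of $(a^ibw)^{2^n}$ are conjugates of $b$ in $\langle a^{2^{n-1}},b\rangle$, and conjugating them coordinatewise does not assemble into conjugation by a single element of $G$ (the conjugators must come from $\psi(\st_G(1))$, which is a proper subgroup of the full direct product), so that route would require substantial additional work rather than a routine repetition. Finally, when $\St_G(3)\ne 1$ the paper disposes of $G_3$ much more cheaply, by noting that $\phi_3((a^ibw)^{2^n})=(b,\ldots,b)$ on the nose; your plan would instead have to prove the conjugacy statement in $G$ and push it down to $G_3$, which is feasible (the image of $(ab)^{2^n}$ in $G_3$ is non-trivial) but should be made explicit.
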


\begin{proof}
From Lemma~\ref{lem:stab-trivial}, we only need to consider $k\le 4$. 
For $k=1$ the result is clear since Beauville groups are not cyclic. For the remaining values of $k$, let  $X=\{x_1, x_2, x_1x_2\}$ and $Y=\{ y_1, y_2, y_1y_2\}$ be two systems of generators for~$G_k$.

Assume now $k= 2$.
Then there exists some $z_1\in X$ with $z_1 \equiv a^i \pmod {G_2'}$ for some odd~$i$, and there exists $z_2\in Y$ with $z_2 \equiv a^j \pmod {G_2'}$ for some odd~$j$. Noting that the order of both $z_1$ and $z_2$ is $2^n$, we now prove that $(a^iw)^{2^{n-1}}$ is conjugate to $(a^jv)^{2^{n-1}}$, for $w,v \in G_2'$.

We observe that  $\St_{G_2}(1)$, and hence $G_2'$, is elementary abelian. Furthermore for each odd $i\in\{1,\ldots,2^n-1\}$ we have 
\begin{equation}\label{eq: G/st(2)'}
G_2'\leq \langle [a^i,b], [a^i,b,a^i],\ldots,[a^i,b,a^i,\overset{2^n-2}{\ldots},a^i]\rangle;
\end{equation}
compare Lemma~\ref{lem:nilpotent} and the proof of Lemma~\ref{lem:cyclic}. It then follows from~\cite[Prop.~1.1.32(i)]{McKay} that
\[
(a^iw)^{2^{n-1}}= a^{i2^{n-1}}[w, a^i , \overset{2^{n-1} -1}{\ldots} , a^i].
\]
From~\eqref{eq: G/st(2)'} there exist $j_1,\ldots,j_{2^{n}-1}\in\{0,1\}$ such that $w$ can be expressed as: 
\[
w= [a^i,b]^{j_1}[a^i,b,a^i]^{j_2}\cdots[a^i,b,a^i,\overset{2^n-2}{\ldots},a^i]^{j_{2^n-1}}
\]

Now for $h\in\St_{G_2}(1)$, bearing in mind that $\St_{G_2}(1)$ is elementary abelian, we obtain from 
\cite[Prop.~1.1.32(ii)]{McKay} that
$[a^{i2^{n-1}},h]=[a^i,h,a^i,\overset{2^{n-1}-1}{\ldots},a^i]$ in~$G_2$.

Thus, setting $h=b^{j_1}[a^i,b]^{j_2}\cdots[a^i,b,a^i,\overset{2^n-3}{\ldots},a^i]^{j_{2^n-1}}$ we deduce that 
\[
[a^{i2^{n-1}},h]=[w, a^i , \overset{2^{n-1} -1}{\ldots} , a^i].
\]
Indeed,
\begin{align*}
&[a^{i2^{n-1}},h]\\
&= [a^i,b^{j_1},a^i,\overset{2^{n-1}-1}{\ldots},a^i][a^i,[a^i,b]^{j_2},a^i,\overset{2^{n-1}-1}{\ldots},a^i]\cdots[a^i,[a^i,b,a^i,\overset{2^n-3}{\ldots},a^i]^{j_{2^n-1}},a^i,\overset{2^{n-1}-1}{\ldots},a^i]\\
&= [[a^i,b]^{j_1},a^i,\overset{2^{n-1}-1}{\ldots},a^i][[a^i,b,a^i]^{j_2},a^i,\overset{2^{n-1}-1}{\ldots},a^i]\cdots[[a^i,b,a^i,\overset{2^n-2}{\ldots},a^i]^{j_{2^n-1}},a^i,\overset{2^{n-1}-1}{\ldots},a^i]\\
&=[w, a^i , \overset{2^{n-1} -1}{\ldots} , a^i]
\end{align*}
where the second equality holds since  $\St_{G_2}(1)'$ is trivial and $[a,x]=[x,a]$ for all $x\in\St_{G_2}(1)$. 

Thus we have 
\[
(a^iw)^{2^{n-1}}=a^{i2^{n-1}}[a^{i2^{n-1}},h]=(a^{i2^{n-1}})^h= (a^{2^{n-1}})^h
\]
where the last equality holds since $a$ has order $2^{n}$ and $i=1+2d$ for some $d\in\mathbb{N}$. This proves that for all $i\in\{1,\ldots,2^n-1\}$ and $w\in G_2'$ the element $(a^iw)^{2^{n-1}}$ is conjugate to~$a^{2^{n-1}}$. Hence $z_1$ and $z_2$ are conjugate and $G_2$ is not a Beauville group.

Now suppose  $\St_G(3)\ne 1$.
Then there exists some $z_1 \in X$ with $z_1 \equiv a^ib \pmod {G_3'}$ for some odd~$i$, and there exists $z_2\in Y$ with $z_2 \equiv a^jb \pmod {G_3'}$ for some odd~$j$. Observe that for $w\in G_3'$, we have 
\[
\phi_3\big((a^ibw)^{2^{n}}\big)=\big(b^{a^{\lambda_{1}\cdot 2^{n-1}}}, \ldots, b^{a^{\lambda_{2^n}\cdot2^{n-1}}} \big)
\]
where  $\lambda_1,\ldots,\lambda_{2^n}\in \{0,1\}$.
Since by Lemma~\ref{lem:stab-trivial} the elements $b$ and $b^{a^{2^{n-1}}}$ coincide modulo $\St_G(2)$, we further have
\[
\phi_3((a^ibw)^{2^{n}})= (b,\ldots,b).
\]
This proves that $z_1^{\,2^{n}}$ and $z_2^{\,2^{n}}$ are equal, and thus $G_3$ is not a Beauville group when $\st_G(3)\ne 1$.

We now consider the remaining cases, where here $G_k=G$.
As before there exists some $z_1\in X$ with $z_1 \equiv a^ib \pmod {G'}$ for some odd~$i$, and there exists $z_2\in Y$ with $z_2 \equiv a^jb \pmod {G'}$ for some odd~$j$. Therefore it suffices to show, for  $\delta\in\{0,1,\ldots, 2^{n-1}-1\}$ and $w\in G'$, that $(a^{1+2\delta}bw)^{2^n}$  is conjugate to $(ab)^{2^n}$ in~$G$.

Recall that $(G')^2\le \st_G(1)'$ and that $\st_G(1)'$ is elementary abelian. Hence, from~\cite[Prop.~1.1.32(i)]{McKay} we have
\[
    (ab)^{2^n}= [b,a,\overset{2^{n-1} -1}\ldots,a]^2 [b,a,\overset{2^n -1}\ldots,a]c
\]
for some $c\in\gamma_{2^n}(G)\cap \st_G(1)'$.
Let $i_1,i_2,i_3\ge 2^n$ be such that
\begin{align*}
  \big[  [b,a,\overset{2^{n-1} -1}\ldots,a]^2,a\big]&\in \gamma_{i_1}(G)\backslash \gamma_{i_1+1}(G)\\
  [b,a,\overset{2^n}\ldots,a]&\in \gamma_{i_2}(G)\backslash \gamma_{i_2+1}(G)\\
  c&\in \gamma_{i_3}(G)\backslash \gamma_{i_3+1}(G),
\end{align*}
and set
\[
\ell:=\min \{i_1,i_2,i_3+1\}.
\]
In other words, the integer~$\ell$ is such that $[(ab)^{2^n},a]\in\gamma_{\ell}(G)\backslash\gamma_{\ell+1}(G)$.

Now
\[
(a^{1+2\delta}bw)^{2^n}= [bw,a^{1+2\delta},\overset{2^{n-1} -1}\ldots,a^{1+2\delta}]^2 [bw,a^{1+2\delta},\overset{2^n -1}\ldots,a^{1+2\delta}] c d,
\]
where $d\in\gamma_{i_3+1}(G)$ is obtained as follows: starting with the commutator $c$, which is a product of commutators in $b$ and $a$ of weight at least $2^n$, with weight exactly 2 in~$b$; compare \cite[Prop.~1.1.32(i)]{McKay}, we have that 
$cd=\widetilde{c}$, where $\widetilde{c}$ is obtained from $c$ by replacing all occurrences of $b$ with $bw$. Here we have also used the standard identities
$[xy,z]=[x,z]^y[y,z]$ and $[x,yz]=[x,z][x,y]^z$.

Next, using  these standard identities
together with Lemmata~\ref{lem:nilpotent} and~\ref{lem:cyclic}, we have
\begin{align*}
    (a^{1+2\delta}bw)^{2^n}
    &\equiv [bw,a^{1+2\delta},\overset{2^{n-1} -1}\ldots,a^{1+2\delta}]^2 [bw,a^{1+2\delta},\overset{2^n -1}\ldots,a^{1+2\delta}] c \pmod {\gamma_{\ell}(G)}\\
    &\equiv [b,a,\overset{2^{n-1} -1}\ldots,a]^2 [b,a,\overset{2^n -1}\ldots,a] c \pmod {\gamma_{\ell}(G)}.
\end{align*}

Therefore
\[
(a^{1+2\delta}bw)^{2^n} \equiv (ab)^{2^n} \pmod {\gamma_{\ell}(G)}.
\]
In other words, we have
$(a^{1+2\delta}bw)^{2^n} = (ab)^{2^n} v$ for $v\in \gamma_{\ell}(G)$. By Lemma~\ref{lem:all-conjugate}, we have that $v=[(ab)^{2^n},g]$ for some $g\in G$, and hence we are done.
\end{proof}

\subsection*{Acknowledgements}
We thank G.\,A.~Fern\'{a}ndez-Alcober for his valuable feedback and helpful suggestions. We also thank the referee for the helpful comments. The first author is supported by the Spanish Government, grant PID2020-117281GB-I00, partly with FEDER funds.
The first and the second authors are supported by the Basque Government, grant IT974-16.
  The first author is also supported by the National Group for Algebraic and Geometric Structures, and their
Applications (GNSAGA - INdAM).
 The third author acknowledges support from EPSRC, grant EP/T005068/1, and from the Lincoln Institute of Advanced Studies. 
 She also thanks the University of the Basque Country for its hospitality.
The first and second authors would like to thank the University of Lincoln for its hospitality while this research was conducted.

\end{document}